\titleformat*{\section}{\normalsize\bfseries}
\def\R{\mathbb{R}}
\def\e{{\varepsilon}}        
\def\p{\partial}
\newtheorem{thm}{Theorem}[section]
\newtheorem{lem}[thm]{Lemma}
\newtheorem{cor}[thm]{Corollary}
\newtheorem{prop}[thm]{Proposition}
\newtheorem{rem}[thm]{Remark}
\begin{document}

\title{
\vspace{-1cm}
\large{Higher-order Asymptotic Profiles for Solutions to the Cauchy Problem for a Dispersive-dissipative Equation with a Cubic Nonlinearity}}
\author{Ikki Fukuda and Yota Irino}
\date{}
\maketitle


\vspace{-0.75cm}
\begin{abstract}
We consider the asymptotic behavior of solutions to the Cauchy problem for a dispersive-dissipative equation with a cubic nonlinearity. 
It is known that the leading term of the asymptotic profile for the solution to this problem is the Gaussian.
Moreover, by analyzing the corresponding integral equation, the higher-order asymptotic expansion for the solution to the linear part and the first asymptotic profile for the Duhamel term have already been obtained. 
In this paper, we construct the second asymptotic profile for the Duhamel term and give the more detailed higher-order asymptotic expansion of the solutions, 
which generalizes the previous works. Furthermore, we emphasize that the newly obtained higher-order asymptotic profiles have a good structure in the sense of satisfying the parabolic self-similarity.
\end{abstract}


{\it Key Words and Phrases.} Dispersive-dissipative equation, Asymptotic behavior, Higher-order asymptotic profiles.

2020 {\it Mathematics Subject Classification Numbers.} 35B40, 35Q53.

\section{Introduction}

We consider the following Cauchy problem for a dispersive-dissipative equation:
\begin{align}
    \label{KdVB}
    \begin{split}
        &u_{t} - u_{xx} - D_{x}^{\alpha}\p_{x}u + \beta u^{2}u_{x} = 0, \ \ x \in \R, \ t>0, \\
        &u(x, 0) =u_{0}(x) , \ \ x \in \R,
    \end{split}
\end{align}
where $u=u(x, t)$ is a real-valued unknown function, $u_{0}(x)$ is a given initial data, $1<\alpha<3$ and $\beta \in \R$.
The subscripts $t$ and $x$ denote the partial derivatives with respect to $t$ and $x$, respectively. 
On the other hand, $D^{\alpha}_{x}$ stands for the fractional derivative defined via the Fourier transform by the formula 
\[D^{\alpha}_{x}f(x):=\mathcal{F}^{-1}\left[ |\xi|^{\alpha}\hat{f}(\xi) \right](x). \]
The definition of the Fourier transform $\hat{f}(\xi)=\mathcal{F}[f](\xi)$ will be given at the end of this section. 
The purpose of our study is to analyze the large time asymptotic behavior of the solutions to \eqref{KdVB}. 
Especially, we would like to investigate the higher-order asymptotic profiles of the solutions. 

First of all, we would like to introduce some previous results related to this problem. 
In order to do that, let us consider the following more generalized problem for \eqref{KdVB}: 
\begin{align}
    \label{gKdVB}
    \begin{split}
        &u_{t} - u_{xx} - D_{x}^{\alpha}\p_{x}u + \p_{x}(u^{q}) = 0, \ \ x \in \R, \ t>0, \\
        &u(x, 0) =u_{0}(x) , \ \ x \in \R,
    \end{split}
\end{align}
where $1<\alpha<3$ and $q\ge2$. Here, the nonlinear term $u^{q}$ should be interpreted either as 
\[
|u|^{q} \quad \text{or} \quad |u|^{q-1}u, 
\]
for negative $u$ and for non-integer $q$. We note that \eqref{gKdVB} becomes \eqref{KdVB} for $\beta=3$ in the case of $q=3$. 
In what follows, let us recall some known results on the Cauchy problem \eqref{gKdVB} for $1<\alpha<3$ and $q\ge2$. 
First, we shall introduce the basic results on the global existence and the decay estimates for the solutions to \eqref{gKdVB} given by Karch \cite{K99-1}. For the proofs of the following results, see Propositions 4.1 and 4.2 and Lemma 5.1 in \cite{K99-1}.   
\begin{prop}[\cite{K99-1}]\label{prop.GE-decay}
    Let $1<\alpha<3$, $q\ge2$ and $u_{0} \in H^{1}(\R)$. 
    Then, there exist $T>0$ and a unique local mild solution $u \in C([0, T); H^{1}(\R))$ to \eqref{gKdVB}. 
    Moreover, if $u_{0} \in H^{1}(\R)\cap L^{1}(\R)$ and $\| u_{0} \|_{H^{1}} + \| u_0 \|_{L^{1}}$ is sufficiently small, 
    then, there exists a unique global mild solution to \eqref{gKdVB} satisfying 
    \begin{equation}\label{GE}
    u\in L^{\infty}([0, \infty); L^{\infty}(\R)), \ \ t^{\frac{1}{4}}u\in L^{\infty}((0, \infty); L^{2}(\R)). 
    \end{equation}
     Furthermore, the solution $u(x, t)$ satisfies the following estimates:
        \begin{equation} 
        \left\| u(t) \right\|_{L^{p}} \le C(1 + t)^{-\frac{1}{2}\left(1-\frac{1}{p}\right)}, \ \  t\ge0, \ \ 
        \left\| u(t) \right\|_{L^{1}} \le C\left(1+t^{-\frac{\alpha-1}{4}}\right), \ \ t>0, \label{u-decay-Lp}
    \end{equation}
    for any $2\le p\le \infty$. In addition, the following estimate holds: 
    \begin{equation} \label{u-decay-dx}
        \left\| \p_{x} u(t) \right\|_{L^{2}} \le Ct^{-\frac{3}{4}}, \ \ t>0. 
    \end{equation}
    \end{prop}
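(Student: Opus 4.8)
The plan is to recover this statement --- which is precisely what a contraction‑mapping analysis of \eqref{gKdVB-IE} delivers --- in three stages: (i) sharp $L^{p}$--$L^{r}$ decay estimates for the linear propagator $S_{\alpha}(t)$ and its spatial derivatives; (ii) a fixed‑point argument in $C([0,T);H^{1}(\R))$ giving the local solution, which for small data runs on all of $[0,\infty)$ in a time‑weighted space; (iii) a direct analysis of the Duhamel term in \eqref{gKdVB-IE} to upgrade the a priori bounds into the decay rates \eqref{u-decay-Lp}--\eqref{u-decay-dx}.

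\textbf{Step 1 (linear estimates).} Since $\wh{S_{\alpha}}(\xi,t)=e^{-t\xi^{2}+it|\xi|^{\alpha}\xi}$, the parabolic factor $e^{-t\xi^{2}}$ controls the high frequencies while the unimodular factor $e^{it|\xi|^{\alpha}\xi}$ is harmless in $L^{2}$; combining Hausdorff--Young with Plancherel and the rescaling $\xi\mapsto t^{-1/2}\xi$ would give
\[
\bigl\|\p_{x}^{k}S_{\alpha}(t)*\phi\bigr\|_{L^{p}}\le C\,t^{-\frac{1}{2}\left(\frac{1}{r}-\frac{1}{p}\right)-\frac{k}{2}}\|\phi\|_{L^{r}},\qquad 1\le r\le p\le\infty,\ k\ge 0 .
\]
The one genuinely $\alpha$‑dependent point is the behaviour of $\|S_{\alpha}(t)\|_{L^{1}}$ as $t\to 0^{+}$: after the rescaling the phase becomes $t^{(1-\alpha)/2}|\eta|^{\alpha}\eta$, which oscillates ever faster because $\alpha>1$, and a van der Corput / stationary‑phase estimate yields $\|S_{\alpha}(t)\|_{L^{1}}\le C\bigl(1+t^{-(\alpha-1)/4}\bigr)$ --- the source of the singular factor in the second bound of \eqref{u-decay-Lp}.

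\textbf{Step 2 (local and global existence).} On $X_{T}:=C([0,T];H^{1}(\R))$ with norm $\sup_{0\le t\le T}\|u(t)\|_{H^{1}}$ I would show that $\Phi[u](t):=S_{\alpha}(t)*u_{0}-\int_{0}^{t}\p_{x}S_{\alpha}(t-\tau)*u^{q}(\tau)\,d\tau$ maps a closed ball into itself and is a contraction, using that $H^{1}(\R)\hookrightarrow L^{\infty}(\R)$ makes $H^{1}(\R)$ an algebra, hence $\|u^{q}-v^{q}\|_{H^{1}}\le C\bigl(\|u\|_{H^{1}}^{q-1}+\|v\|_{H^{1}}^{q-1}\bigr)\|u-v\|_{H^{1}}$, together with the $k=1$ case of the linear estimate (placing $u^{q}$ in $L^{2}$, so the $\p_{x}$ costs only the integrable factor $(t-\tau)^{-1/2}$). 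For $T$ small no smallness is needed; to reach $T=\infty$ one repeats the argument in a weighted space such as
\[
Y:=\Bigl\{u:\ \|u\|_{Y}:=\sup_{t\ge 0}\Bigl[(1+t)^{\frac{1}{2}\left(1-\frac{1}{p}\right)}\|u(t)\|_{L^{p}}+t^{\frac{3}{4}}\|\p_{x}u(t)\|_{L^{2}}\Bigr]<\infty\Bigr\},
\]
where the smallness of $\|u_{0}\|_{H^{1}}+\|u_{0}\|_{L^{1}}$ is what absorbs the nonlinear terms; membership $u\in Y$ then yields \eqref{GE}.

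\textbf{Step 3 (decay rates, and the main obstacle).} The bounds \eqref{u-decay-Lp} for the linear part $S_{\alpha}(t)*u_{0}$ are immediate from Step 1 with $r\in\{1,2\}$ (using the $t\to0^{+}$ kernel bound for the $L^{1}$ estimate). For the Duhamel term one splits $\int_{0}^{t}=\int_{0}^{t/2}+\int_{t/2}^{t}$: on $[0,t/2]$ use the linear estimate out of $L^{1}$ against $\|u^{q}(\tau)\|_{L^{1}}\le C(1+\tau)^{-\frac{q-1}{2}}$, while on $[t/2,t]$ keep $u^{q}$ in $L^{2}$, so the $\p_{x}$ costs only the integrable $(t-\tau)^{-1/2}$ and one gains from $\|u^{q}(\tau)\|_{L^{2}}\le C(1+\tau)^{-\frac{q}{2}\left(1-\frac{1}{2q}\right)}$; adding the two contributions reproduces \eqref{u-decay-Lp}, the exponent inequalities closing precisely because $q\ge2$. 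Finally \eqref{u-decay-dx} comes from estimating $\p_{x}^{2}S_{\alpha}(t-\tau)*u^{q}$: the two derivatives produce a non‑integrable $(t-\tau)^{-1}$ near the diagonal, so one integrates by parts once, moving a derivative onto $\p_{x}(u^{q})=q\,u^{q-1}u_{x}$, and then controls $\|u^{q-1}u_{x}\|$ by pairing the $L^{\infty}$ decay of $u$ with the very quantity $\|\p_{x}u\|_{L^{2}}$ one is trying to bound --- so this estimate closes only inside the fixed point in $Y$, not afterward. I expect this coupled estimate for $\p_{x}u$, together with the careful bookkeeping of the $\alpha$‑dependent singular factor in the $L^{1}$ bound, to be the main technical difficulty.
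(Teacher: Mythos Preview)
The paper does not actually prove Proposition~\ref{prop.GE-decay}: it is stated as a known result, attributed to Karch~\cite{K99-1} (``summarizing up Propositions 4.1 and 4.2 and Lemma 5.1 in \cite{K99-1}''), and no argument is given in the present paper. So there is nothing to compare your proposal against here.

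That said, your sketch is the standard contraction-mapping route and is essentially what one expects Karch's original proof to do: kernel estimates for $S_{\alpha}$, a fixed point in $C([0,T];H^{1})$ for local existence, and a time-weighted space for the global small-data solution with the decay built in. Your identification of the $\alpha$-dependent $L^{1}$ kernel bound $\|S_{\alpha}(t)\|_{L^{1}}\le C(1+t^{-(\alpha-1)/4})$ as the source of the singular factor in \eqref{u-decay-Lp}, and of the need to close the $\p_{x}u$ estimate inside the fixed point rather than a posteriori, are both on target. One small point: in Step~3 your claimed bound $\|u^{q}(\tau)\|_{L^{2}}\le C(1+\tau)^{-\frac{q}{2}(1-\frac{1}{2q})}$ has a slightly odd exponent; the natural estimate is $\|u^{q}\|_{L^{2}}\le\|u\|_{L^{\infty}}^{q-1}\|u\|_{L^{2}}\le C(1+\tau)^{-\frac{q-1}{2}-\frac{1}{4}}$, which is what the paper uses later (see \eqref{u^3-est-L2} for $q=3$). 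This does not affect the closure of the argument.
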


The above result tells us that the solution $u(x, t)$ to \eqref{KdVB} decays at the same rate as the solutions to the parabolic equations such as the linear heat equation. Moreover, it is known that not only the decay rate but also the asymptotic behavior of the solution has some similar properties to those of the parabolic equations if $\alpha>1$. 
Next, we would like to explain about the detailed asymptotic behavior of the solution to \eqref{gKdVB} for $\alpha>1$. 
The asymptotic profile of the solution strongly depends on the nonlinear exponent $q$. 
In particular, $q=2$ and $q=3$ are the special cases in some sense.
In what follows, we start with introducing the known results of \eqref{gKdVB} for $q=2$. 
In this situation, the case of $\alpha=2$ has been significantly studied. 
When $q=2$ and $\alpha=2$, \eqref{gKdVB} becomes the well-known KdV--Burgers equation:
\begin{align}
    \label{KdVB-0}
    \begin{split}
        &u_{t} - u_{xx} + u_{xxx} + \p_{x}(u^{2}) = 0, \ \ x \in \R, \ t>0, \\
        &u(x, 0) =u_{0}(x) , \ \ x \in \R. 
    \end{split}
\end{align}
For the large time behavior of the solution to \eqref{KdVB-0}, it was first studied by Amick--Bona--Schonbek \cite{ABS89}. 
They derived the time decay estimates of the solution to \eqref{KdVB-0}. 
In particular, they showed that if $u_{0}\in H^{2}(\R) \cap L^{1}(\R)$, then the solution satisfies the first $L^{p}$-decay estimate in \eqref{u-decay-Lp} for any $2\le p \le \infty$. 
Moreover, Karch \cite{K99-2} studied \eqref{KdVB-0} in more details and extended the results given in \cite{ABS89}. 
Actually, if $u_{0}\in H^{1}(\R)\cap L^{1}(\R)$, then the following asymptotic formula holds: 
\begin{equation}\label{Karch}
\lim_{t\to \infty}t^{\frac{1}{2}\left(1-\frac{1}{p}\right)}\left\|u(t)-\chi(t)\right\|_{L^{p}}
=0,
\end{equation}
for any $1\le p\le \infty$, where $\chi(x, t)$ is the self-similar solution to the Burgers equation: 
\begin{equation*}
\chi_{t} - \chi_{xx} + \p_{x}(\chi^{2}) =0, \ \ x\in \R, \ t>0. 
\end{equation*}
We note that the self-similar solution $\chi(x, t)$ can be obtained explicitly (cf.~\cite{C51, H50}). 
Moreover, Hayashi--Naumkin \cite{HN06} improved the asymptotic rate given in \eqref{Karch} for $p=\infty$. 
After that, Kaikina--Ruiz-Paredes \cite{KP05} succeeded to construct the second asymptotic profile for the solution, under the additional condition $xu_{0}\in L^{1}(\R)$. In view of the second asymptotic profile, they proved that the optimal asymptotic rate to $\chi(x, t)$ is $t^{-1}\log t$ in $L^{\infty}(\R)$. For more related results to this problem \eqref{KdVB-0} in the similar situations, we can also refer to \cite{F19-1, HKNS06}. 

Now, we remark that this case $q=2$ in \eqref{gKdVB}, is a special case in the sense that the shape of the asymptotic profile changes compared with the case of $q>2$ in \eqref{gKdVB} (we will explain about the case of $q>2$ in the next paragraph). 
Even if the case of $\alpha=1$, the situation is similar to $\alpha=2$. When $\alpha=1$, the fractional dispersion term in \eqref{KdVB} can be rewritten by  
\[D^{1}_{x}\p_{x}u=\mathcal{F}^{-1}[i|\xi|\xi\hat{u}(\xi)](x)=\mathcal{F}^{-1}[-i\mathrm{sgn}(\xi)\widehat{u_{xx}}(\xi)](x)
=\mathcal{H}u_{xx}.\]
Therefore, \eqref{KdVB} transforms into the generalized Benjamin--Ono--Burgers equation: 
\begin{align}
    \label{BOB}
    \begin{split}
        &u_{t}-u_{xx}-\mathcal{H}u_{xx} + \p_{x}(u^{q}) = 0, \ \ x \in \R, \ t>0, \\
        &u(x, 0) =u_{0}(x) , \ \ x \in \R. 
    \end{split}
\end{align}
Here, the above operator $\mathcal{H}$ is called the Hilbert transform. In Dix \cite{D91}, he studied \eqref{BOB} for $q=2$ and prove that the solution $u(x, t)$ tends to the self-similar solution to \eqref{BOB} with $q=2$, 
under some suitable assumptions (see, also \cite{HKNS06}). However, compared with the case of $q=2$, for $q>2$, it is known that the asymptotic profile of the solution is different from the self-similar solution. 
Actually, Bona--Luo \cite{BL11} showed that if $q\ge3$ with $p\in \mathbb{N}$, the asymptotic profile of the solution $u(x, t)$ to \eqref{BOB} is exactly the same as that of the solution to the corresponding linearized equation. From the above perspective, one can see that the case $q=2$ is interesting not only in the case of $\alpha=1$ or $\alpha=2$ but also in the case of $\alpha$ is fractional. 
However, we note that for the case of $\alpha \neq1$ and $\alpha\neq2$ in \eqref{gKdVB}, the asymptotic profile of the solution has not yet been obtained for $q=2$, because the fractional dispersion term $D^{\alpha}_{x}\p_{x}u$ is difficult to treat. Although, this problem is very important issue, we do not deal with it in this paper. 
In our study, we would like to treat the other distinguished case $q=3$ in \eqref{gKdVB}, which will be explained in the next paragraph. In addition, note that the case of $\alpha=1$ is not treated too, because we focus on the case of $\alpha>1$, where the solution $u(x, t)$ behaves like the solution to the parabolic equations. 

In what follows, we shall introduce some known results for \eqref{gKdVB} in the case of $q>2$ and the more general $\alpha>1$. 
When $q>2$ in \eqref{gKdVB}, we can say that the nonlinearity is weak compared with the case of $q=2$, because if the solution $u(x, t)$ decays, 
then the nonlinear term $\p_{x}\left(u^{q}\right)$ decays faster than $-u_{xx}$. 
Because of this, the asymptotic profile of the solution changes essentially. 
Actually, Karch~\cite{K99-1} proved that the leading term of the solution $u(x, t)$ is governed by the solution to the linear heat equation. 
Moreover, in \cite{K99-1}, for the linear part of the solution to \eqref{gKdVB}, i.e. $\left(S_{\alpha}(t) *u_{0}\right)(x)$ in \eqref{gKdVB-IE} below, 
the higher-order asymptotic expansion has been obtained when $\alpha>1$. 
Especially, it is known that its asymptotic expansion strongly depends on the effect of the dispersion, i.e. the exponent $\alpha$. 
In order to explain such results, let us define the following Green function: 
\begin{equation}\label{def.S}
    S_{\alpha}(x, t) := \frac{1}{\sqrt{2\pi}} \mathcal{F}^{-1} \left[ e^{-t\xi^2+it|\xi|^{\alpha}\xi} \right](x).
\end{equation}
Then, from the Duhamel principle, we obtain the following integral equation associated with the Cauchy problem \eqref{gKdVB}:
\begin{align} \label{gKdVB-IE}
    u(t) = S_{\alpha}(t) *u_{0} - \int_{0}^{t} \p_{x} S_{\alpha}(t - \tau) * u^{q} (\tau) d\tau.
\end{align}
Now, we shall define $G(x, t)$ called the heat kernel and the constants $M$ and $m$ as follows:  
\begin{equation} \label{def.G-M}
G(x, t) := \frac{1}{\sqrt{4 \pi t}} e^{-\frac{x^2}{4t}}, \ \ M:=\int_{\R} u_0(x) dx, \ \ m:=\int_{\R}xu_{0}(x)dx. 
\end{equation}
Then, we first introduce the following asymptotic formulas for $\left(S_{\alpha}(t) *u_{0}\right)(x)$: 
\begin{thm}[\cite{K99-1}]\label{thm.Karch-L}
    Assume $u_{0} \in L^{1}(\R)$ and $xu_{0} \in L^{1}(\R)$. Then, the following asymptotics holds: 
        
\smallskip
\noindent
{\rm (i)}
Suppose $2<\alpha<3$. Then, for any $1\le p\le \infty$, we have 
     \begin{equation}\label{Karch-L1}
            \lim_{t \to \infty} t^{ \frac{1}{2}\left(1-\frac{1}{p}\right) + \frac{1}{2} } \left\| S_{\alpha}(t)*u_{0} -MG(t) + m\p_{x}G(t) \right\|_{L^{p}} = 0.
       \end{equation}
{\rm (ii)}
Suppose $\displaystyle \alpha=\frac{N+1}{N}$, where $N\in \mathbb{N}$. Then, for any $1\le p\le \infty$, we have 
     \begin{align}\label{Karch-L2}
     \begin{split}
            \lim_{t \to \infty} t^{ \frac{1}{2}\left(1-\frac{1}{p}\right) + \frac{1}{2} } \biggl\| S_{\alpha}(t)*u_{0} &- \sum_{k=0}^{N-1}\frac{t^{k}}{k!}(D_{x}^{\alpha}\p_{x})^{k}\left\{MG(t)-m\p_{x}G(t)\right\} \\
             &- \frac{M}{N!}(tD_{x}^{\alpha}\p_{x})^{N} G(t) \biggl\|_{L^{p}} = 0. 
             \end{split}
         \end{align}     
{\rm (iii)}
Suppose $\displaystyle \frac{N+2}{N+1}<\alpha<\frac{N+1}{N}$, where $N\in \mathbb{N}$. Then, for any $1\le p\le \infty$, we have 
     \begin{equation}\label{Karch-L3}
            \lim_{t \to \infty} t^{ \frac{1}{2}\left(1-\frac{1}{p}\right) + \frac{1}{2} } \left\| S_{\alpha}(t)*u_{0} - \sum_{k=0}^{N}\frac{t^{k}}{k!}(D_{x}^{\alpha}\p_{x})^{k}\left\{MG(t)-m\p_{x}G(t)\right\}  \right\|_{L^{p}} = 0.
        \end{equation}
Here, $S_{\alpha}(x, t)$ is defined by \eqref{def.S}, while $G(x, t)$, $M$ and $m$ are defined by \eqref{def.G-M}. 
\end{thm}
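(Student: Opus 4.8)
The plan is to run the whole argument on the Fourier side. With the $\bigl(\tfrac{1}{\sqrt{2\pi}}\bigr)$-normalized transform one has $\widehat{S_{\alpha}(t)*u_{0}}(\xi)=e^{-t\xi^{2}}e^{it|\xi|^{\alpha}\xi}\widehat{u_{0}}(\xi)$, while $\widehat{G(t)}(\xi)=\frac{1}{\sqrt{2\pi}}e^{-t\xi^{2}}$, $\widehat{\p_{x}G(t)}(\xi)=\frac{i\xi}{\sqrt{2\pi}}e^{-t\xi^{2}}$ and $\widehat{(tD_{x}^{\alpha}\p_{x})^{k}G(t)}(\xi)=\frac{1}{\sqrt{2\pi}}(it|\xi|^{\alpha}\xi)^{k}e^{-t\xi^{2}}$; hence each profile in (i)--(iii) equals $e^{-t\xi^{2}}$ times a truncation (at the order $N$ prescribed by the case, with $N=0$ in case (i)) of the Taylor series of $e^{it|\xi|^{\alpha}\xi}$ at $\xi=0$, multiplied by the first-order Taylor polynomial $\frac{1}{\sqrt{2\pi}}(M-im\xi)$ of $\widehat{u_{0}}(\xi)$. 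The hypotheses $u_{0},xu_{0}\in L^{1}(\R)$ enter precisely here: they give $\widehat{u_{0}}(\xi)=\frac{1}{\sqrt{2\pi}}(M-im\xi)+r(\xi)$ with $|r(\xi)|\le C|\xi|$ and $r(\xi)/|\xi|\to0$ as $\xi\to0$, and they make $\p_{\xi}\widehat{u_{0}}$ bounded. Writing $w(\cdot,t)$ for the function inside the norm and $\phi:=t|\xi|^{\alpha}\xi$, $P_{N}(z):=\sum_{k=0}^{N}\frac{(iz)^{k}}{k!}$, $R_{N+1}:=e^{i\phi}-P_{N}(\phi)$, a direct computation gives $\widehat{w}(\xi,t)=e^{-t\xi^{2}}\Phi(\xi,t)$ with
\[
\Phi=\frac{1}{\sqrt{2\pi}}(M-im\xi)\,R_{N+1}+P_{N}(\phi)\,r(\xi)\qquad\Bigl(+\ \tfrac{-im}{\sqrt{2\pi}\,N!}\,\xi\,(i\phi)^{N}\ \text{in case (ii)}\Bigr),
\]
where $|R_{N+1}|\le C\min\{(t|\xi|^{\alpha+1})^{N},(t|\xi|^{\alpha+1})^{N+1}\}$.

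To handle all $p\in[1,\infty]$ at once it suffices to bound $w$ in $L^{1}(\R)$ and in $L^{\infty}(\R)$: by H\"older $\|w\|_{L^{p}}\le\|w\|_{L^{1}}^{1/p}\|w\|_{L^{\infty}}^{1-1/p}$, and the two endpoint rates we aim for, $\|w(\cdot,t)\|_{L^{\infty}}=o(t^{-1})$ and $\|w(\cdot,t)\|_{L^{1}}=o(t^{-1/2})$, interpolate to exactly $o\bigl(t^{-\frac12(1-1/p)-\frac12}\bigr)$. The $L^{\infty}$-estimate reduces to $\|w\|_{L^{\infty}}\le\frac{1}{\sqrt{2\pi}}\|\widehat{w}\|_{L^{1}_{\xi}}=\frac{1}{\sqrt{2\pi}}\int_{\R}e^{-t\xi^{2}}|\Phi|\,d\xi$. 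The $L^{1}$-estimate uses Cauchy--Schwarz with weight $(1+x^{2})^{-1/2}$ and Plancherel: $\|w\|_{L^{1}}\le C\|w\|_{L^{2}}^{1/2}\,\|\,|x|w\,\|_{L^{2}}^{1/2}=C\|\widehat{w}\|_{L^{2}_{\xi}}^{1/2}\,\|\p_{\xi}\widehat{w}\|_{L^{2}_{\xi}}^{1/2}$, where $\p_{\xi}\widehat{w}=-2t\xi e^{-t\xi^{2}}\Phi+e^{-t\xi^{2}}\p_{\xi}\Phi$ and $\p_{\xi}\Phi$ is controlled by boundedness of $\p_{\xi}\widehat{u_{0}}$ and $\p_{\xi}r$, by the identity $\p_{\xi}R_{N+1}=i(\alpha+1)t|\xi|^{\alpha}R_{N}$, and by the fact that $\p_{\xi}|\xi|^{(\alpha+1)k}$ behaves like $|\xi|^{(\alpha+1)k-1}$, which lies in $L^{2}_{\mathrm{loc}}$ near $\xi=0$ because $(\alpha+1)k-1\ge\alpha>0$ for $k\ge1$ (the $k=0$ contribution to $\Phi$ is $r(\xi)$, handled separately).

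All the integrals are then evaluated by the scaling $\xi=\eta/\sqrt{t}$. Under it $\phi=t^{(1-\alpha)/2}\mathrm{sgn}(\eta)|\eta|^{\alpha+1}\to0$ for fixed $\eta$ (since $\alpha>1$), so $P_{N}(\phi)\to1$, the minimum in the bound for $R_{N+1}$ eventually selects $(t|\xi|^{\alpha+1})^{N+1}$, and $\sqrt{t}\,r(\eta/\sqrt{t})/|\eta|\to0$; dominated convergence against the Gaussian then upgrades the crude $O$-bounds to the required $o$-bounds for the terms built on $r$. The bookkeeping of rates is: the level-$k$ ``$M$''-monomial of $\Phi$ contributes to $\|w\|_{L^{\infty}}$ at rate $t^{-\frac12-\frac{k(\alpha-1)}{2}}$ and to $\|w\|_{L^{1}}$ at rate $t^{-\frac{k(\alpha-1)}{2}}$ (the extra $\sqrt{t}$ from the weight $|x|$ cancelling the $\tfrac12$), while a level-$k$ ``$m$''-monomial is one further factor $t^{-1/2}$ smaller. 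The hypothesis $\alpha>\frac{N+2}{N+1}$ places the first dropped level strictly below the target rate $t^{-1}$: in (i) and (iii) this is the level $k=N+1$ (rate $t^{-\frac12-\frac{(N+1)(\alpha-1)}{2}}$, exponent $<-1$); in (ii) it is the dropped level-$N$ ``$m$''-monomial, of rate $t^{-3/2}$, together with the level-$(N+1)$ remainder of rate $t^{-\frac{N+1}{2N}-\frac12}<t^{-1}$. Conversely $\alpha<\frac{N+1}{N}$ (resp.\ $\alpha=\frac{N+1}{N}$ in (ii)) makes the last kept level of order $\gtrsim t^{-1}$, so that it genuinely belongs to the profile. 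The $\p_{\xi}\widehat{w}$-term in the $L^{1}$-bound is estimated in the same way and costs only an extra half power of $t$, which is precisely absorbed by the exponent $\tfrac12$ in $\|\widehat{w}\|_{L^{2}_{\xi}}^{1/2}\|\p_{\xi}\widehat{w}\|_{L^{2}_{\xi}}^{1/2}$.

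The main obstacle is not any single inequality but the requirement of convergence to $0$ rather than a mere $O$-bound: this is what forces one to exploit the genuine smallness $r(\xi)=o(|\xi|)$ (via dominated convergence after rescaling) for the $k=0$ contribution, which otherwise is only $O(t^{-1})$. The most delicate case is the borderline (ii), $\alpha=\frac{N+1}{N}$, where the retained term $\frac{M}{N!}(tD_{x}^{\alpha}\p_{x})^{N}G(t)$ and the term $-m\p_{x}G(t)$ decay at exactly the same critical rate $t^{-1}$: one must check that they are genuinely independent profile terms (their Fourier symbols $|\xi|^{(\alpha+1)N}\mathrm{sgn}(\xi)^{N}$ and $\xi$ are not proportional when $N\ge1$, $\alpha>1$), that the coincidence of these two rates produces no logarithmic correction (in contrast to a nonlinear Duhamel term, the linear part produces no $\int_{0}^{t}\tau^{-1}d\tau$), and that the order-$(N+1)$ remainder remains $o(t^{-1})$ at this exact exponent, which holds because its rescaled rate is $t^{\frac{(1-\alpha)(N+1)}{2}-\frac12}=t^{-\frac{N+1}{2N}-\frac12}$, strictly below $t^{-1}$. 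Uniformity in $p$ then follows for free from the H\"older interpolation of the two endpoints.
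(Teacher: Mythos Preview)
The paper does not give its own proof of this theorem: it is quoted verbatim from Karch \cite{K99-1} and used as an input to Corollaries~\ref{thm.Main} and~\ref{cor.main}. So there is no in-paper argument to compare against. That said, your Fourier-side reconstruction is precisely the approach one expects (and the one Karch uses): Taylor-expand $e^{it|\xi|^{\alpha}\xi}$ and $\widehat{u_{0}}(\xi)$ about $\xi=0$, control $L^{\infty}$ by $\|\widehat{w}\|_{L^{1}_{\xi}}$, control $L^{1}$ by the weighted Cauchy--Schwarz/Plancherel inequality $\|w\|_{L^{1}}\le C\|\widehat{w}\|_{L^{2}}^{1/2}\|\p_{\xi}\widehat{w}\|_{L^{2}}^{1/2}$ (this is exactly the tool the present paper invokes in \eqref{SI} when proving Lemma~\ref{lem.G-est}), rescale $\xi=\eta/\sqrt{t}$, and interpolate. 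Your identification of the $o$-versus-$O$ issue for the $r(\xi)$ contribution, and the handling of the borderline case~(ii), are also on target.

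One small correction to your decomposition of $\Phi$: in cases (i) and (iii) the exact identity is
\[
\Phi(\xi,t)=\tfrac{1}{\sqrt{2\pi}}(M-im\xi)\,R_{N+1}(\phi)+e^{i\phi}\,r(\xi),
\]
not $P_{N}(\phi)\,r(\xi)$; the two differ by $R_{N+1}(\phi)r(\xi)$, which is harmless (it carries an extra factor $|\xi|$ on top of the $R_{N+1}$ bound and hence decays strictly faster than everything else), so your estimates go through unchanged. Similarly, the stated two-sided bound $|R_{N+1}|\le C\min\{|\phi|^{N},|\phi|^{N+1}\}$ is not literally true for small $|\phi|$ when $N\ge1$, but only the $|\phi|^{N+1}$ branch is ever used after rescaling, and the Gaussian takes care of integrability at large $|\phi|$, so this is cosmetic. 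With these two tweaks your sketch is a complete proof.
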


We note that for any of \eqref{Karch-L1}, \eqref{Karch-L2} and \eqref{Karch-L3}, the first term of their asymptotic expansions is given by $G(x, t)$. 
In addition, we can say that the effect of the dispersion term appears after the second terms of the expansions in \eqref{Karch-L2} and \eqref{Karch-L3}. 
On the other hand, dispersion term does not affect on \eqref{Karch-L1}.  

By virtue of Theorem~\ref{thm.Karch-L}, if we can get the asymptotic profile for the Duhamel term in \eqref{gKdVB-IE}, 
then we are able to obtain the asymptotic expansion for the solution $u(x, t)$ to \eqref{gKdVB}. 
Indeed, Karch \cite{K99-1} succeeded to construct the asymptotic profile for the Duhamel term in \eqref{gKdVB-IE}, if $q>2$, $u_{0}\in H^{1}(\R)\cap L^{1}(\R)$ and $xu_{0}\in L^{1}(\R)$.  
According to his results, its asymptotic profile can be divided into three cases: $2<q<3$; $q=3$; $q>3$. 
In this paper, we only focus on the case of $q=3$, which is a distinguished situation in which the asymptotic profile for the Duhamel term is linear but is multiplied by a logarithmic function $\log t$. 
More precisely, the following asymptotic formula has been proven in \cite{K99-1}: 
\begin{thm}[\cite{K99-1}]\label{thm.Karch-N}
    Let $1<\alpha<3$ and $q=3$. Assume that $u_{0} \in H^{1}(\R) \cap L^{1}(\R)$, $xu_{0} \in L^{1}(\R)$ and $\| u_{0} \|_{H^{1}} + \| u_{0} \|_{L^{1}}$ is sufficiently small.
    Suppose that $u(x, t)$ be the global mild solution to \eqref{gKdVB} mentioned in Proposition~\ref{prop.GE-decay}.
    Then, $u(x, t)$ satisfies the following asymptotics: 
\begin{equation}\label{Karch-N}
\lim_{t\to \infty}\frac{t^{\frac{1}{2}\left(1-\frac{1}{p}\right) + \frac{1}{2} }}{\log t}\left\|u(t)-S_{\alpha}(t)*u_{0}+\frac{M^{3}}{4\sqrt{3}\pi} (\log t)\p_{x} G(t)\right\|_{L^{p}}=0, 
\end{equation}
for any $1\le p\le \infty$, where $S_{\alpha}(x, t)$ is defined by \eqref{def.S}, while $G(x, t)$ and $M$ are defined by \eqref{def.G-M}. 
\end{thm}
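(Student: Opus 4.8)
The plan is to work directly with the integral equation \eqref{gKdVB-IE} for $q=3$ and to control the Duhamel term
$\mathcal{D}(t):=-\int_{0}^{t}\p_{x}S_{\alpha}(t-\tau)*u^{3}(\tau)\,d\tau$
by comparing $u^{3}(\tau)$ with $M^{3}G^{3}(\tau)$ and then evaluating the resulting linear convolution integral. First I would split $\mathcal{D}(t)$ into the leading piece $-\int_{0}^{t}\p_{x}S_{\alpha}(t-\tau)*\bigl(M^{3}G^{3}(\tau)\bigr)d\tau$ and a remainder involving $u^{3}(\tau)-M^{3}G^{3}(\tau)$. For the remainder, I would use the decay estimate $\|u(\tau)\|_{L^{p}}\le C(1+\tau)^{-\frac12(1-1/p)}$ from Proposition~\ref{prop.GE-decay} together with the known first-order asymptotics of $u$ toward $MG$ (which follows from Karch's analysis, or can be bootstrapped from Theorem~\ref{thm.Karch-L} and a preliminary $o(1)$ estimate on $\mathcal{D}$); writing $u^{3}-M^{3}G^{3}=(u-MG)(u^{2}+uMG+M^{2}G^{2})$ and using $\|S_{\alpha}(t-\tau)*f\|_{L^{p}}\lesssim (t-\tau)^{-\frac12(1/r-1/p)}\|f\|_{L^{r}}$ and $\|\p_{x}S_{\alpha}(t-\tau)*f\|_{L^{p}}\lesssim (t-\tau)^{-\frac12(1/r-1/p)-\frac12}\|f\|_{L^{r}}$, the $\tau$-integral over $[0,t]$ should be $o(t^{-\frac12(1-1/p)-\frac12}\log t)$ once the extra decay from $u-MG=o(\tau^{-\frac14})$ in $L^{2}$ (say) is inserted; the borderline logarithmic divergence is exactly absorbed into the main term, so the remainder must be genuinely $o(\log t)$, which is the delicate bookkeeping.

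For the leading piece, the key computation is that $\int_{\R}G^{3}(x,\tau)\,dx = \frac{1}{4\sqrt{3}\pi}\,\tau^{-1}$ (from $\int_{\R}e^{-3x^{2}/(4\tau)}dx=\sqrt{4\pi\tau/3}$ and the $(4\pi\tau)^{-3/2}$ prefactor), so that $M^{3}G^{3}(\tau)$ has total mass $\frac{M^{3}}{4\sqrt{3}\pi}\tau^{-1}$, which is non-integrable near $\tau=\infty$ and produces the $\log t$. Concretely I would write
$\p_{x}S_{\alpha}(t-\tau)*\bigl(M^{3}G^{3}(\tau)\bigr)
=\frac{M^{3}}{4\sqrt{3}\pi}\,\tau^{-1}\,\p_{x}S_{\alpha}(t-\tau)*\delta
+\p_{x}S_{\alpha}(t-\tau)*\bigl(M^{3}G^{3}(\tau)-\tfrac{M^{3}}{4\sqrt{3}\pi}\tau^{-1}\delta\bigr)$,
where the second term carries zero mass and hence an extra half power of $(t-\tau)^{-1/2}$ or of $\tau^{-1/2}$ in its estimate, making its $\tau$-integral convergent and of lower order. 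Then $-\int_{c}^{t}\frac{M^{3}}{4\sqrt{3}\pi}\tau^{-1}\,\p_{x}S_{\alpha}(t-\tau)\,d\tau$ must be shown to be asymptotic to $-\frac{M^{3}}{4\sqrt{3}\pi}(\log t)\,\p_{x}G(t)$; here one uses that $\p_{x}S_{\alpha}(t-\tau)$ is close to $\p_{x}G(t-\tau)$ (again via Theorem~\ref{thm.Karch-L}-type estimates applied to the kernel, or directly by scaling), that $\p_{x}G(t-\tau)\approx \p_{x}G(t)$ for $\tau=o(t)$, and that $\int_{c}^{t}\tau^{-1}d\tau=\log t + O(1)$, with the contributions from $\tau$ near $0$, near $t/2$, and near $t$ all being $O(1)\cdot t^{-\frac12(1-1/p)-\frac12}$, i.e. $o(\log t\cdot t^{-\frac12(1-1/p)-\frac12})$.

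The main obstacle I anticipate is the borderline nature of the logarithm: every error term I want to discard is only \emph{barely} smaller than the main term (by a factor like $(\log t)^{-1}$ or $(\log t)^{-1}\log\log t$ if one is careless), so the estimates must be carried out with enough precision to extract a true $o(\log t)$ and not merely $O(\log t)$. In particular the region $\tau\in(t/2,t)$ for the leading integral, where $(t-\tau)^{-1/2}$ is singular but $\tau^{-1}\sim t^{-1}$ is small, and the region $\tau\in(1,t^{\gamma})$ for the remainder, where the slow $\tau^{-1}$ mass competes with the decay of $u-MG$, both need careful splitting. Once the linear-convolution lemma $\int_{c}^{t}\tau^{-1}\p_{x}S_{\alpha}(t-\tau)\,d\tau = (\log t)\p_{x}G(t)+o(\log t\cdot t^{-\frac12(1-1/p)-\frac12})$ in $L^{p}$ is established (this is the technical heart, and it is where the hypothesis $xu_{0}\in L^{1}$ and $1<\alpha<3$ enter to guarantee $\p_{x}S_{\alpha}(t)\sim \p_{x}G(t)$ with a summable error), combining it with the remainder estimate and with Theorem~\ref{thm.Karch-L} for $S_{\alpha}(t)*u_{0}$ yields \eqref{Karch-N} by the triangle inequality.
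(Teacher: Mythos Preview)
Your proposal is correct and follows essentially the same route as the paper's proof of the stronger Theorem~\ref{thm.Duhamel} (which contains \eqref{Karch-N} as a consequence): split off $u^{3}-(MG)^{3}$ as a remainder, control it via the factorization $(u-MG)(u^{2}+uMG+M^{2}G^{2})$ together with a quantitative bound on $u-MG$ (this is exactly Proposition~\ref{prop.u-ap}), and extract the logarithm from the leading piece using $\int_{\R}G^{3}(x,\tau)\,dx=\tfrac{1}{4\sqrt{3}\pi}\,\tau^{-1}$.

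The one organizational difference is that the paper first replaces $S_{\alpha}$ by $G$ in the \emph{entire} Duhamel integral (Proposition~\ref{prop.S-G-error}, giving an error of order $t^{-\frac{1}{2}(1-\frac{1}{p})-\frac{\alpha}{2}}\log t$) and thereafter works only with the heat kernel; this is cleaner than keeping $S_{\alpha}$ until the end as you suggest, because $G(t-\tau)*G^{3}(\tau)$ and the associated self-similar rescalings simplify explicitly (see the definition of $v$, $V$ in \eqref{def.v-V} and Proposition~\ref{prop.v-V-Psi}). One small correction: the hypothesis $xu_{0}\in L^{1}$ is not what drives the kernel approximation $\p_{x}S_{\alpha}\sim\p_{x}G$ (that is Lemma~\ref{lem.S-ap}, a pure kernel estimate with no data assumption); it enters instead through Lemma~\ref{lem.G-ap} to produce the quantitative rate in \eqref{u-ap-pre} for $u-MG$, and this rate is precisely what makes the remainder $o$ of the $\log t$ main term rather than merely $O(\log t)$.
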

Combining the above Theorems~\ref{thm.Karch-L} and \ref{thm.Karch-N}, 
we can get the higher-order asymptotic expansion for the solution $u(x, t)$ to \eqref{gKdVB} when $q=3$. 
However, the asymptotic rate $o(t^{-\frac{1}{2}\left(1-\frac{1}{p}\right) -\frac{1}{2} }\log t)$ given in \eqref{Karch-N} is slower than \eqref{Karch-L1}, \eqref{Karch-L2} and \eqref{Karch-L3} by $o(\log t)$. Therefore, the asymptotic rates related to the higher-order asymptotic expansions for the solution $u(x, t)$ also become slower than \eqref{Karch-L1}, \eqref{Karch-L2} and \eqref{Karch-L3}. 
This gives us the natural question whether it is possible to improve the asymptotic rate given in \eqref{Karch-N}.
For improving this asymptotic rate, it might be effective to investigate the second asymptotic profile for the Duhamel term in \eqref{gKdVB-IE}. 
In this paper, we analyze \eqref{KdVB}, which is \eqref{gKdVB} for $q=3$ with a parameter $\beta \in \R$. 
As our main results, we succeeded to derive the second asymptotic profile for the Duhamel term of the integral equation \eqref{KdVB-IE} below and improved the results given by Karch \cite{K99-1}. 
For the nonlinear parabolic equations such as \eqref{CD} and \eqref{SC} mentioned below, the higher-order asymptotic expansions of the solutions have been obtained in \cite{I-K} and \cite{KM09, Y09}, respectively. Therefore, the method used in these papers may be applied to our target problem \eqref{KdVB}. However, in the present paper, instead of using that technique, we apply a different approach to derive a kind of new type of asymptotic profile (see also Remark \ref{rem.self} below).

\medskip
This paper is organized as follows. In Section 2, 
we state our main results Theorem~\ref{thm.Duhamel} and Corollaries \ref{thm.Main} and \ref{cor.main} below, which generalize the results given in \cite{K99-1}. 
In order to prove them, we prepare a couple of lemmas and propositions in Section 3. 
Finally, we give the proof of our main results in Section 4. 
The main novelty of this paper is the derivation of the second asymptotic profile for the Duhamel term in \eqref{KdVB-IE} below. The key of this derivation is Proposition~\ref{prop.v-V-Psi} below, and the main techniques of its proof is based on the idea used in \cite{FI21} for the asymptotic analysis of the other dispersive-dissipative equation.  

\medskip
\par\noindent
\textbf{\bf{Notations.}} 

\smallskip
We denote the Fourier transform of $f$ and the inverse Fourier transform of $g$ as follows:
\begin{align*}
\hat{f}(\xi)=\mathcal{F}[f](\xi):=\frac{1}{\sqrt{2\pi}}\int_{\R}e^{-ix\xi}f(x)dx, \ \ \mathcal{F}^{-1}[g](x):=\frac{1}{\sqrt{2\pi}}\int_{\R}e^{ix\xi}g(\xi)d\xi.
\end{align*}

For $1\le p \le \infty$, $L^{p}(\R)$ means the usual Lebesgue spaces. 
Also, for $k\in \mathbb{N}\cup \{0\}$, we define the Sobolev spaces $H^{k}(\R)$ as the space of functions $u=u(x)$ such that $\p_{x}^{l}u$ are $L^{2}$-functions on $\R$ for $0\le l\le k$.  

Let $I\subseteq [0, \infty)$ be an interval and $X$ be a Banach space. Then, $L^{\infty}(I; X)$ denotes the space of all measurable functions $u: I \to X$ such that $\|u(t)\|_{X}$ belongs to $L^{\infty}(I)$. Also, $C(I; X)$ denotes the subspace of $L^{\infty}(I; X)$ of all continuous functions $u: I \to X$. 

Throughout this paper, $C$ denotes various positive constants, which may vary from line to line during computations. 
Also, it may depend on the norm of the initial data $u_{0}(x)$ and other parameters such as $\alpha$ and $\beta$. 
However, we note that it does not depend on the space variable $x$ and the time variable $t$. 
    
\section{Main Results}
    
In this section, we would like to state our main results. First of all, we shall rewrite the Cauchy problem \eqref{KdVB} to the integral form. It follows from the Duhamel principle that 
\begin{align}\label{KdVB-IE}
    u(x, t) = \left( S_{\alpha}(t) *u_{0} \right)(x) + I_{\alpha, \beta}[u](x, t), 
\end{align}
where $S_{\alpha}(x, t)$ is defined by \eqref{def.S}, while $I_{\alpha, \beta}[u](x, t)$ is defined by 
\begin{equation}\label{def.I}
I_{\alpha, \beta}[u](x, t):=-\frac{\beta}{3}\int_{0}^{t}\p_{x}S_{\alpha}(t-\tau)*u^{3}(\tau)d\tau.  
\end{equation}
For the global existence and the decay estimates of the solutions to \eqref{KdVB}, we can easily get the same results of Proposition \ref{prop.GE-decay}. 
Now, let us introduce the new function $\Psi(x, t)$ by 
    \begin{align}
        \Psi(x, t) &:= t^{-1} \Psi_{*} \left( \frac{x}{\sqrt{t}} \right), \ \ 
        \Psi_{*}(x) := \frac{d}{dx} \left( \int_{0}^{1} \left( G(1 - s) \ast F(s) \right) (x) ds \right), \label{def.Psi} \\
        F(y, s) &:= s^{-\frac{3}{2}} F_{*}\left( \frac{y}{\sqrt{s}} \right), \ \ 
        F_{*}(y) = \frac{1}{8\sqrt{\pi^3}} e^{-\frac{3y^2}{4}} - \frac{1}{8\sqrt{3\pi^3}} e^{-\frac{y^2}{4}}. \label{def.F}
    \end{align}
Moreover, we shall define the constant $\mathcal{M}$ by 
    \begin{equation}\label{def.m-mathM} 
        \mathcal{M} := \int_{0}^{1}\int_{\R}u^{3}(y, \tau)dyd\tau + \int_{1}^{\infty}\int_{\R} \left(u^{3} - \left(MG\right)^{3} \right)(y, \tau) dyd\tau.  
    \end{equation}
Then, we are able to obtain the second asymptotic profile for the Duhamel term $I_{\alpha, \beta}[u](x, t)$ in \eqref{KdVB-IE} defined by \eqref{def.I}, 
which generalizes Theorem~\ref{thm.Karch-N} mentioned in Section 1: 
\begin{thm}\label{thm.Duhamel}
    Let $1<\alpha<3$ and $\beta \in \R$. Assume that $u_{0} \in H^{1}(\R) \cap L^{1}(\R)$, $xu_{0} \in L^{1}(\R)$ and $\| u_{0} \|_{H^{1}} + \| u_{0} \|_{L^{1}}$ is sufficiently small.
    Then, \eqref{KdVB} has a unique global mild solution $u(x, t)$ satisfying the all properties mentioned in Proposition~\ref{prop.GE-decay}.
    Moreover, $u(x, t)$ satisfies the following asymptotics: 
    \begin{align}\label{I-ap}
    \begin{split}
   \lim_{t\to \infty} t^{\frac{1}{2}\left(1 - \frac{1}{p} \right) + \frac{1}{2}} \biggl\| I_{\alpha, \beta}[u](t) &+\frac{\beta M^3}{12\sqrt{3}\pi} (\log t)\p_{x} G(t) \\
   &+\frac{\beta \mathcal{M}}{3}\p_{x}G(t) + \frac{\beta M^{3}}{3}\Psi(t) \biggl\|_{L^{p}}=0,  
   \end{split}
    \end{align}
    for any $1 \le p \le \infty$, where $I_{\alpha, \beta}[u](x, t)$ is defined by \eqref{def.I}. Also, $G(x, t)$ and $M$ are defined by \eqref{def.G-M}, while $\Psi(x, t)$ and $\mathcal{M}$ are defined by \eqref{def.Psi} and \eqref{def.m-mathM}, respectively. 
\end{thm}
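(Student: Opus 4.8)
The plan is to analyze the Duhamel term $I_{\alpha,\beta}[u](x,t) = -\frac{\beta}{3}\int_0^t \p_x S_\alpha(t-\tau)*u^3(\tau)\,d\tau$ by splitting the time integral at $\tau=1$ and at $\tau=t/2$, and on each piece replacing the kernel $S_\alpha$ and the nonlinearity $u^3$ by their leading asymptotic profiles with controlled errors. The key inputs are: the decay estimates from Proposition~\ref{prop.GE-decay} (which give $\|u(\cdot,\tau)\|_{L^p}\lesssim (1+\tau)^{-\frac12(1-1/p)}$ and hence control $\|u^3(\tau)\|_{L^1},\|u^3(\tau)\|_{L^p}$), Theorem~\ref{thm.Karch-L} to handle the linear evolution of a concentrated source (so that $\p_x S_\alpha(t-\tau)*(\text{something with integral }c)$ behaves like $c\,\p_x G(t-\tau)$ up to lower-order dispersive corrections), and the known first-order asymptotics $u(x,\tau)\sim MG(x,\tau)$ coming from Theorem~\ref{thm.Karch-N}. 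The logarithm in \eqref{I-ap} will emerge from the region $1\le\tau\le t/2$ where $u^3\approx (MG)^3$ and $\int_\R (MG)^3(y,\tau)\,dy = \frac{M^3}{4\sqrt{3}\pi}\tau^{-1}$ (this is exactly the constant appearing), so that $\int_1^{t/2} \p_x S_\alpha(t-\tau)*(MG)^3(\tau)\,d\tau$ contributes, to leading order, $\frac{M^3}{4\sqrt{3}\pi}\p_x G(t)\int_1^{t/2}\tau^{-1}\,d\tau \sim \frac{M^3}{4\sqrt{3}\pi}(\log t)\,\p_x G(t)$.

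Concretely, I would write $u^3(y,\tau) = \big(u^3(y,\tau) - (MG)^3(y,\tau)\big)\mathbf{1}_{\tau\ge1} + (MG)^3(y,\tau)\mathbf{1}_{\tau\ge1} + u^3(y,\tau)\mathbf{1}_{0\le\tau<1}$ and correspondingly decompose $I_{\alpha,\beta}[u]$. The "remainder" part $\int_0^1\!\!\int u^3 + \int_1^\infty\!\!\int (u^3-(MG)^3)$ has finite total mass, namely the constant $\mathcal M$ of \eqref{def.m-mathM}; collapsing $\p_x S_\alpha(t-\tau)*(\cdot)$ against this mass and using Theorem~\ref{thm.Karch-L} (the dispersive corrections being lower order) produces the term $-\frac{\beta\mathcal M}{3}\p_x G(t)$, after checking that the integrability hypotheses behind $\mathcal M$ hold — this uses that $u^3-(MG)^3$ is integrable in $(y,\tau)$ over $\{\tau\ge1\}$, which follows from $u - MG$ decaying faster than $G$ by a factor with an extra $\tau^{-\delta}$ or $\tau^{-1/2}$ (via Theorem~\ref{thm.Karch-N} together with the decay estimates) combined with $(MG)^2$ supplying enough $\tau$-decay. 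The genuinely nonlinear "self-similar" correction $\frac{\beta M^3}{3}\Psi(t)$ comes from the near-diagonal region $\tau\in[t/2,t]$: there one cannot collapse the kernel, but one can rescale. Writing $\tau = ts$, $x=\sqrt t\,\eta$ and using $u^3\approx(MG)^3$ in this range, the integral $\int_{t/2}^t \p_x S_\alpha(t-\tau)*(MG)^3(\tau)\,d\tau$ converges after the parabolic scaling to $t^{-1}$ times the profile $\frac{d}{d\eta}\int_0^1 (G(1-s)*F(s))(\eta)\,ds$, which is precisely $\Psi_*$ in \eqref{def.Psi}, with $F_*$ in \eqref{def.F} being the self-similar profile of $(MG)^3/M^3$ — note $(MG)^3(y,\tau) = M^3\tau^{-3/2}F_*(y/\sqrt\tau)$ with $F_*$ as written. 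Here the dispersive part of $S_\alpha$ again only contributes lower-order terms because $\alpha>1$ forces $|\xi|^\alpha\xi$ to be negligible against $\xi^2$ on the relevant frequency scale $|\xi|\sim t^{-1/2}$.

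The estimates that glue the pieces together are routine applications of the heat-kernel and $S_\alpha$-kernel smoothing estimates ($\|\p_x S_\alpha(t)*f\|_{L^p}\lesssim t^{-\frac12-\frac12(1/r-1/p)}\|f\|_{L^r}$) and the Hausdorff–Young / convolution inequalities, together with Taylor expansion of the kernel around the concentration point to extract the leading term and bound the remainder by the first absolute moment of the source (this is where $xu_0\in L^1$, and correspondingly the finiteness of the relevant moments of $u^3$, enters). I would run all $L^p$ estimates for $p=1$ and $p=\infty$ and interpolate, or more simply carry the weighted $L^p$ bound directly; the bookkeeping is standard once the three regions are set up. The main obstacle, and the part requiring the most care, is the near-diagonal region $[t/2,t]$: one must show simultaneously that (a) replacing $u^3$ by $(MG)^3$ there is a lower-order error — this needs the sharp rate from Theorem~\ref{thm.Karch-N}, i.e. $\|u(\tau)-S_\alpha(\tau)*u_0\|$ is $o(\tau^{-\cdots}\log\tau)$ and $\|S_\alpha(\tau)*u_0 - MG(\tau)\|$ is $o(\tau^{-\cdots})$, cubed appropriately and integrated over a region of length $t/2$ — and (b) the rescaled integral actually converges to $\Psi_*$ with the stated error rate $o(t^{-1+1/(2p)-1/2})\cdot$(no log), i.e. the convergence is fast enough to beat the $\log t$ that one is trying to sharpen past. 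Controlling the interface at $\tau\sim t/2$ so that the $\log t$ from the middle region and the $O(1)$-type contributions from the diagonal region separate cleanly — rather than mixing into an $O(\log t)$ error — is the crux; this is exactly the improvement over Karch's \eqref{Karch-N}, and it is where the technique of \cite{FI21} referenced in the introduction is presumably deployed.
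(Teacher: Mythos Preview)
Your decomposition of $u^3$ at $\tau=1$ is correct and matches the paper, and the idea that the $\mathcal{M}$ term arises from collapsing the integrable remainder $u^3-(MG)^3$ against the kernel is right as well (the paper does this in Proposition~\ref{prop.W-ap}, after first \emph{globally} replacing $S_\alpha$ by $G$ via Lemma~\ref{lem.S-ap}, which is cleaner than invoking Theorem~\ref{thm.Karch-L}). However, your treatment of the $(MG)^3$ piece has a genuine gap.

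First, you have misread the definition of $F_*$: it is \emph{not} the self-similar profile of $G^3$. One has $G^3(y,\tau)=\tau^{-3/2}\cdot\frac{1}{8\sqrt{\pi^3}}e^{-3y^2/(4\tau)}$, which is only the first summand in \eqref{def.F}; the second summand is $-\frac{1}{4\sqrt{3}\pi}\tau^{-1}G(y,\tau)$, chosen precisely so that $\int_\R F_*(y)\,dy=0$. This zero-mean property is the engine of the argument, and your plan does not use it.

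Second, the split at $\tau=t/2$ cannot produce $\Psi_*$ as defined. Rescaling $\int_{t/2}^t\p_x G(t-\tau)*G^3(\tau)\,d\tau$ with $\tau=ts$ yields an integral over $s\in[\tfrac12,1]$, not $[0,1]$. Conversely, on $[1,t/2]$ the error committed when collapsing $\p_x G(t-\tau)*G^3(\tau)$ to $\frac{1}{4\sqrt{3}\pi}\tau^{-1}\p_x G(x,t)$ is governed by the second moment of $G^3(\cdot,\tau)$, which is $O(1)$ in $\tau$; the resulting correction $\sim\int_1^{t/2}\p_x^3G(x,t-\tau)\,d\tau$ lands exactly at order $t^{-\frac12(1-1/p)-\frac12}$ --- the same order as $\Psi$. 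So the ``interface at $\tau\sim t/2$'' does \emph{not} separate cleanly; both regions feed into the constant you are trying to isolate. You correctly flag this as the crux but do not supply a mechanism to resolve it.

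The paper's resolution is to avoid splitting at $t/2$ altogether. After replacing $S_\alpha$ by $G$, one writes the logarithmic term $V(x,t)=\frac{1}{4\sqrt{3}\pi}(\log t)\,\p_xG(x,t)$ itself as a Duhamel integral, namely $V=\frac{1}{4\sqrt{3}\pi}\int_1^t G(t-\tau)*(\tau^{-1}\p_xG(\tau))\,d\tau$ (since $V_t-V_{xx}=\frac{1}{4\sqrt{3}\pi}t^{-1}\p_xG$ with $V(\cdot,1)=0$), and subtracts it \emph{inside} the integral. This yields
\[
v-V=\int_1^t\p_x G(t-\tau)*F(\tau)\,d\tau
\]
with $F$ having zero mass. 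Now a single parabolic rescaling $\tau=ts$ converts the whole integral $\int_1^t$ into $\int_{1/t}^1$, and the limit $t\to\infty$ gives $\int_0^1$, i.e.\ $\Psi$; the zero-mean of $F_*$ is precisely what makes the $s\to0$ endpoint integrable and the remainder $O(t^{-1/2})$ relative to $\Psi$ (Proposition~\ref{prop.v-V-Psi}). This subtraction-then-rescale step, exploiting $\int F_*=0$, is the key idea your plan is missing.
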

\begin{rem}\label{rem.self}
Compared with the previous works such as {\rm \cite{I-K, KM09, Y09}}, related to the higher-order asymptotic expansion of the solutions to the nonlinear parabolic equations, our formula is a different from them with respect to a structure of the asymptotic profile. Formally, one can derive the asymptotic profile of the Duhamel term, by substituting the heat kernel into $S_{\alpha}(x, t-\tau)$ and $u^{3}(x, \tau)$ in \eqref{def.I}: 
\[
I_{\alpha, \beta}[u](x, t) \sim -\frac{\beta M^{3}}{3}\int_{0}^{t}\p_{x} G(t-\tau)*G^{3}(\tau)d\tau+etc...
\]
The above integral has a singularity of $G^{3}(x, \tau)$ as $\tau \to 0$. 
Therefore, in order to control this singularity, $G^{3}(x, \tau)$ is usually replaced by $G^{3}(x, 1+\tau)$ as follows: 
\[
-\frac{\beta M^{3}}{3}\int_{0}^{t}\p_{x} G(t-\tau)*G^{3}(1+\tau)d\tau. 
\]
For details, see {\rm \cite{I-K, KM09, Y09}}. However, due to this change, the above asymptotic profile lost good structure with respect to the scaling. On the other hand, we emphasize that our newly obtained asymptotic profile $\Psi(x, t)$ has the parabolic self-similarity. More precisely, it satisfies  
\[
\Psi(x, t)=\lambda^{2}\Psi(\lambda x, \lambda^{2}t), \ \ \text{for} \ \ \lambda>0.
\]
\end{rem}
\begin{rem}
Under the assumptions in Theorem~\ref{thm.Duhamel}, the solution $u(x, t)$ to \eqref{KdVB} satisfies
   \begin{equation}\label{I-ap-cor}
   \left\| I_{\alpha, \beta}[u](t) +\frac{\beta M^3}{12\sqrt{3}\pi} (\log t)\p_{x} G(t) \right\|_{L^{p}}=\left(C_{*}+o(1)\right)t^{-\frac{1}{2}\left(1 - \frac{1}{p} \right) - \frac{1}{2}}
    \end{equation}
    as $t\to \infty$, for any $1 \le p \le \infty$, where the constant $C_{*}$ is defined by 
    \[
    C_{*}:= \left\| \frac{\beta \mathcal{M}}{3} \p_{x}G(1) + \frac{\beta M^{3}}{3}\Psi_{*}\right\|_{L^{p}}. 
    \]
This means that the asymptotic rate given in \eqref{Karch-N} can be improved, i.e. our result \eqref{I-ap-cor} generalizes the result \eqref{Karch-N} given in {\rm \cite{K99-1}}. 
\end{rem}
\begin{rem}
The assumption $\alpha>1$ guarantees that $S_{\alpha}(x, t)$ is well approximated by $G(x, t)$ as $t\to \infty$. For details, see Lemma~\ref{lem.S-ap} below. 
On the other hand, the condition $\alpha<3$ is a technical assumption. 
It will be used to prove Propositions \ref{prop.u-ap} and \ref{prop.S-G-error} below.
\end{rem}

Combining \eqref{KdVB-IE}, Theorems~\ref{thm.Duhamel} and \ref{thm.Karch-L}, 
we can get the following higher-order asymptotic expansion for the nonlinear solution $u(x, t)$ to \eqref{KdVB}:
\begin{cor}\label{thm.Main}
    Under the same assumptions in Theorem~\ref{thm.Duhamel}, the solution $u(x, t)$ to \eqref{KdVB} satisfies the following asymptotics:
    
\smallskip
\noindent
{\rm (i)}
Suppose $2<\alpha<3$. Then, for any $1\le p\le \infty$, we have 
     \begin{align}
     \begin{split}\label{main-1}
            \lim_{t \to \infty} t^{ \frac{1}{2}\left(1-\frac{1}{p}\right) + \frac{1}{2} } \biggl\| u(t) &-MG(t) +\frac{\beta M^3}{12\sqrt{3}\pi} (\log t)\p_{x} G(t) \\
        &+ \left(m+\frac{\beta \mathcal{M}}{3}\right)\p_{x}G(t) + \frac{\beta M^{3}}{3}\Psi(t) \biggl\|_{L^{p}} = 0.
        \end{split}
       \end{align}
{\rm (ii)}
Suppose $\displaystyle \alpha=\frac{N+1}{N}$, where $N\in \mathbb{N}$. Then, for any $1\le p\le \infty$, we have 
     \begin{align}
     \begin{split}\label{main-2}
            \lim_{t \to \infty} t^{ \frac{1}{2}\left(1-\frac{1}{p}\right) + \frac{1}{2} } \biggl\| u(t) &- \sum_{k=0}^{N-1}\frac{t^{k}}{k!}(D_{x}^{\alpha}\p_{x})^{k}\left\{MG(t)-m\p_{x}G(t)\right\}  - \frac{M}{N!}(tD_{x}^{\alpha}\p_{x})^{N} G(t)  \\
        &+\frac{\beta M^3}{12\sqrt{3}\pi} (\log t)\p_{x} G(t) + \frac{\beta \mathcal{M}}{3}\p_{x}G(t) + \frac{\beta M^{3}}{3}\Psi(t) \biggl\|_{L^{p}} = 0.
         \end{split}
         \end{align}
{\rm (iii)}
Suppose $\displaystyle \frac{N+2}{N+1}<\alpha<\frac{N+1}{N}$, where $N\in \mathbb{N}$. Then, for any $1\le p\le \infty$, we have 
     \begin{align}
     \begin{split}\label{main-3}
            \lim_{t \to \infty} t^{ \frac{1}{2}\left(1-\frac{1}{p}\right) + \frac{1}{2} } \biggl\| u(t) &- \sum_{k=0}^{N}\frac{t^{k}}{k!}(D_{x}^{\alpha}\p_{x})^{k}\left\{MG(t)-m\p_{x}G(t)\right\} \\
        &+\frac{\beta M^3}{12\sqrt{3}\pi} (\log t)\p_{x} G(t) + \frac{\beta \mathcal{M}}{3}\p_{x}G(t) + \frac{\beta M^{3}}{3}\Psi(t) \biggl\|_{L^{p}} = 0.
        \end{split}
        \end{align}
Here, $G(x, t)$, $M$ and $m$ are defined by \eqref{def.G-M}, while $\Psi(x, t)$ and $\mathcal{M}$ are defined by \eqref{def.Psi} and \eqref{def.m-mathM}, respectively. 
\end{cor}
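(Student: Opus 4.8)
The plan is to obtain each of \eqref{main-1}--\eqref{main-3} directly from the integral equation \eqref{KdVB-IE}, by inserting into it the higher-order expansion of the linear part $S_{\alpha}(t)*u_{0}$ supplied by Theorem~\ref{thm.Karch-L} together with the expansion of the Duhamel term $I_{\alpha,\beta}[u]$ supplied by Theorem~\ref{thm.Duhamel}, and then estimating with the triangle inequality in $L^{p}(\R)$. First note that Theorem~\ref{thm.Duhamel}, whose hypotheses coincide with those of the corollary, already produces the unique global mild solution $u$ with all the properties of Proposition~\ref{prop.GE-decay}; moreover the hypotheses $u_{0}\in L^{1}(\R)$ and $xu_{0}\in L^{1}(\R)$ required to apply Theorem~\ref{thm.Karch-L} are part of our assumptions. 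So it remains only to verify the three limits.

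I would begin with case (i), $2<\alpha<3$. Using $u = S_{\alpha}(t)*u_{0} + I_{\alpha,\beta}[u]$ from \eqref{KdVB-IE}, the function inside the $L^{p}$-norm in \eqref{main-1} splits as
\begin{align*}
 &u(\cdot,t) - MG(\cdot,t) + \frac{\beta M^{3}}{12\sqrt{3}\pi}(\log t)\p_{x} G(\cdot,t) + \Big(m+\frac{\beta\mathcal{M}}{3}\Big)\p_{x} G(\cdot,t) + \frac{\beta M^{3}}{3}\Psi(\cdot,t) \\
 &\qquad= \Big( S_{\alpha}(t)*u_{0} - MG(\cdot,t) + m\,\p_{x} G(\cdot,t)\Big) \\
 &\qquad\quad + \Big( I_{\alpha,\beta}[u](\cdot,t) + \frac{\beta M^{3}}{12\sqrt{3}\pi}(\log t)\p_{x} G(\cdot,t) + \frac{\beta\mathcal{M}}{3}\p_{x} G(\cdot,t) + \frac{\beta M^{3}}{3}\Psi(\cdot,t)\Big).
\end{align*}
Taking $\|\cdot\|_{L^{p}}$, multiplying by $t^{\frac12(1-\frac1p)+\frac12}$ and letting $t\to\infty$, the first bracket tends to $0$ by \eqref{Karch-L1} and the second by \eqref{I-ap}. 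Crucially, both of these limits are stated with exactly the weight $t^{\frac12(1-\frac1p)+\frac12}$, so nothing is lost in the triangle inequality, and \eqref{main-1} follows.

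Cases (ii) and (iii) go through verbatim, with the sole modification that in the first bracket the profile $MG(\cdot,t)-m\p_{x} G(\cdot,t)$ of $S_{\alpha}(t)*u_{0}$ is replaced by the longer profile appearing in \eqref{Karch-L2}, respectively \eqref{Karch-L3}, and the corresponding limit is used in place of \eqref{Karch-L1}; the Duhamel contribution, controlled by \eqref{I-ap}, is unchanged. Summing the two expansions yields \eqref{main-2} and \eqref{main-3}. I do not expect any real obstacle at this stage, since all the analytic content is already contained in Theorems~\ref{thm.Karch-L} and \ref{thm.Duhamel}; the one point to keep in view — and precisely what makes \eqref{main-1}--\eqref{main-3} finer than what one would get by combining Theorem~\ref{thm.Karch-N} with Theorem~\ref{thm.Karch-L} — is that the remainder in Theorem~\ref{thm.Duhamel} decays like $o(t^{-\frac12(1-\frac1p)-\frac12})$, i.e. with the same weight as the expansions of the linear part in Theorem~\ref{thm.Karch-L}, rather than the weaker $o(t^{-\frac12(1-\frac1p)-\frac12}\log t)$ of \eqref{Karch-N}; because these weights match, the triangle inequality closes without loss.
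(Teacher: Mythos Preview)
Your proof is correct and follows exactly the same approach as the paper: split $u$ via the integral equation \eqref{KdVB-IE} into the linear part and the Duhamel term, then apply Theorem~\ref{thm.Karch-L} and Theorem~\ref{thm.Duhamel} respectively with the common weight $t^{\frac12(1-\frac1p)+\frac12}$ and the triangle inequality. The paper's own proof is in fact just a one-sentence reference to this combination, so your write-up is more detailed but identical in substance.
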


Moreover, by virtue of the above result, we can also obtain the following asymptotic formulas with the optimal decay order:
\begin{cor}\label{cor.main}
Under the same assumptions in Theorem~\ref{thm.Duhamel}, the following estimates hold true: 

\smallskip
\noindent
{\rm (i)}
Suppose $2<\alpha<3$. Then, we have 
\begin{equation}\label{cor-1}
\left\| u(t)-MG(t) + \frac{\beta M^3}{12\sqrt{3}\pi} (\log t)\p_{x} G(t) \right\|_{L^{p}}
=\left( C_{\dag} + o(1) \right) t^{ -\frac{1}{2}\left(1-\frac{1}{p}\right) - \frac{1}{2} }
\end{equation}
as $t\to \infty$, for any $1 \le p \le \infty$. 

\smallskip
\noindent
{\rm (ii)}
Suppose $\displaystyle \alpha=\frac{N+1}{N}$, where $N\in \mathbb{N}$. Then, we have 
\begin{equation}\label{cor-2}
\left\| u(t)-M\sum_{k=0}^{N-1}\frac{t^{k}}{k!}(D_{x}^{\alpha}\p_{x})^{k}G(t)+ \frac{\beta M^3}{12\sqrt{3}\pi} (\log t)\p_{x} G(t) \right\|_{L^{p}} 
=\left( C_{\dag} + o(1) \right) t^{ -\frac{1}{2}\left(1-\frac{1}{p}\right) - \frac{1}{2} } 
\end{equation}
as $t\to \infty$, for any $1 \le p \le \infty$. 

\smallskip
\noindent
{\rm (iii)}
Suppose $\displaystyle \frac{N+2}{N+1}<\alpha<\frac{N+1}{N}$, where $N\in \mathbb{N}$. Then, we have 
\begin{equation}\label{cor-3}
\left\| u(t)-M\sum_{k=0}^{N}\frac{t^{k}}{k!}(D_{x}^{\alpha}\p_{x})^{k}G(t)+ \frac{\beta M^3}{12\sqrt{3}\pi} (\log t)\p_{x} G(t) \right\|_{L^{p}}
=\left( C_{\dag} + o(1) \right) t^{ -\frac{1}{2}\left(1-\frac{1}{p}\right) - \frac{1}{2} }
\end{equation}
as $t\to \infty$, for any $1 \le p \le \infty$. 
Here, the constant $C_{\dag}$ is defined by 
\begin{equation}\label{const-d}
C_{\dag}:=
\begin{cases}
\displaystyle \left\| \left(m+ \frac{\beta \mathcal{M}}{3}\right)\p_{x}G(1) + \frac{\beta M^{3}}{3}\Psi_{*}\right\|_{L^{p}}, 
\ \ \text{for} \ \ {\rm (i)} \ \ \text{or} \ \ {\rm (iii)}, \\[1em]
\displaystyle \left\| \left(m+ \frac{\beta \mathcal{M}}{3}\right)\p_{x}G(1) + \frac{\beta M^{3}}{3}\Psi_{*}- \frac{M}{N!}(D_{x}^{\alpha}\p_{x})^{N} G(1)\right\|_{L^{p}}, 
\ \ \text{for} \ \ {\rm (ii)}. 
\end{cases}
\end{equation}
\end{cor}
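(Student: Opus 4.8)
The plan is to read off Corollary~\ref{cor.main} from the higher-order expansion already proved in Corollary~\ref{thm.Main} (which itself comes from combining Theorem~\ref{thm.Duhamel}, Theorem~\ref{thm.Karch-L} and the integral equation \eqref{KdVB-IE}), by rewriting the ``remaining profile'' there in self-similar form, computing its $L^{p}$-norm exactly, and using the triangle inequality to absorb the $o$-term of Corollary~\ref{thm.Main}. Throughout, fix $1\le p\le\infty$ and put $\gamma:=\tfrac12\bigl(1-\tfrac1p\bigr)+\tfrac12=1-\tfrac1{2p}$, so the target rate is $t^{-\gamma}$.

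First I would record the precise orders of the lower-order terms. From the parabolic scaling $G(x,t)=t^{-1/2}G(x/\sqrt t,1)$ we get $\p_{x}G(x,t)=t^{-1}(\p_{x}G)(x/\sqrt t,1)$, while $\Psi(x,t)=t^{-1}\Psi_{*}(x/\sqrt t)$ by the definition \eqref{def.Psi}. On the Fourier side, from $\widehat{(tD_{x}^{\alpha}\p_{x})^{k}G}(\xi,t)=(it|\xi|^{\alpha}\xi)^{k}(2\pi)^{-1/2}e^{-t\xi^{2}}$ and the substitution $\xi=t^{-1/2}\eta$ one obtains, with fixed functions $g_{k},\widetilde g_{k}\in L^{1}(\R)\cap L^{\infty}(\R)$,
\[
(tD_{x}^{\alpha}\p_{x})^{k}G(x,t)=t^{-\frac12+\frac{k(1-\alpha)}{2}}\,g_{k}\!\left(\tfrac{x}{\sqrt t}\right),\qquad
(tD_{x}^{\alpha}\p_{x})^{k}\p_{x}G(x,t)=t^{-1+\frac{k(1-\alpha)}{2}}\,\widetilde g_{k}\!\left(\tfrac{x}{\sqrt t}\right),
\]
where $g_{k}=(D_{x}^{\alpha}\p_{x})^{k}G(\cdot,1)$. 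Combined with the elementary identity $\|t^{-1}f(\cdot/\sqrt t)\|_{L^{p}}=t^{-1+\frac1{2p}}\|f\|_{L^{p}}=t^{-\gamma}\|f\|_{L^{p}}$ and with $\alpha>1$ (so that $k(1-\alpha)/2<0$ for all $k\ge1$), this identifies, in each of the three cases of Corollary~\ref{thm.Main}, the leftover after cancelling the heat part $M\sum_{k}\tfrac{t^{k}}{k!}(D_{x}^{\alpha}\p_{x})^{k}G(\cdot,t)$ and keeping the $\log t$-term as in Corollary~\ref{cor.main}: the only leftover terms of exact order $t^{-\gamma}$ are $m\,\p_{x}G(\cdot,t)$ (the $k=0$ summand of the remaining $m$-sum), $\tfrac{\beta\mathcal{M}}{3}\p_{x}G(\cdot,t)$, $\tfrac{\beta M^{3}}{3}\Psi(\cdot,t)$, and --- precisely in case (ii), where $N(1-\alpha)/2=-\tfrac12$ so that $(tD_{x}^{\alpha}\p_{x})^{N}G(\cdot,t)$ itself has order $t^{-\gamma}$ --- the term $\tfrac{M}{N!}(tD_{x}^{\alpha}\p_{x})^{N}G(\cdot,t)$; every other leftover term, i.e.\ $\tfrac{t^{k}}{k!}(D_{x}^{\alpha}\p_{x})^{k}\p_{x}G(\cdot,t)$ with $k\ge1$, has $L^{p}$-norm $o(t^{-\gamma})$.

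Finally, collecting the exact-order leftover terms into $P_{0}(x,t)=t^{-1}Q(x/\sqrt t)$ --- so that $Q=(m+\tfrac{\beta\mathcal{M}}{3})\p_{x}G(\cdot,1)+\tfrac{\beta M^{3}}{3}\Psi_{*}$ in cases (i) and (iii), while $Q=(m+\tfrac{\beta\mathcal{M}}{3})\p_{x}G(\cdot,1)+\tfrac{\beta M^{3}}{3}\Psi_{*}-\tfrac{M}{N!}(D_{x}^{\alpha}\p_{x})^{N}G(\cdot,1)$ in case (ii) --- the scaling identity gives $\|P_{0}(\cdot,t)\|_{L^{p}}=t^{-\gamma}\|Q\|_{L^{p}}=C_{\dag}t^{-\gamma}$ with $C_{\dag}$ exactly the constant in \eqref{const-d}. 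Let $A(\cdot,t)$ denote the argument of the $L^{p}$-norm on the left side of \eqref{cor-1}, \eqref{cor-2} or \eqref{cor-3} (according to the case), and let $P_{1}(\cdot,t)$ gather the remaining, faster-decaying leftover terms, so $\|P_{1}(\cdot,t)\|_{L^{p}}=o(t^{-\gamma})$. Then Corollary~\ref{thm.Main} is precisely the statement $\|A(\cdot,t)+P_{0}(\cdot,t)+P_{1}(\cdot,t)\|_{L^{p}}=o(t^{-\gamma})$, whence $\|A(\cdot,t)+P_{0}(\cdot,t)\|_{L^{p}}=o(t^{-\gamma})$, and the triangle inequality gives $\bigl|\,\|A(\cdot,t)\|_{L^{p}}-C_{\dag}t^{-\gamma}\,\bigr|=\bigl|\,\|A(\cdot,t)\|_{L^{p}}-\|P_{0}(\cdot,t)\|_{L^{p}}\,\bigr|\le\|A(\cdot,t)+P_{0}(\cdot,t)\|_{L^{p}}=o(t^{-\gamma})$, i.e.\ $\|A(\cdot,t)\|_{L^{p}}=(C_{\dag}+o(1))t^{-\gamma}$, which is the assertion. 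I expect the only genuine care to be the bookkeeping: correctly tracking which leftover terms sit exactly at the critical rate $t^{-\gamma}$ --- in particular the case-dependent presence of $\tfrac{M}{N!}(tD_{x}^{\alpha}\p_{x})^{N}G$ when $\alpha=\tfrac{N+1}{N}$, and the fact that the $k\ge1$ terms of the $m$-expansion drop out --- so that the constant obtained really coincides with the $C_{\dag}$ of \eqref{const-d}. There is no new analytic difficulty; everything rests on Corollary~\ref{thm.Main} and elementary parabolic scaling.
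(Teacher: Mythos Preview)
Your proposal is correct and follows essentially the same approach as the paper's proof: both extract from Corollary~\ref{thm.Main} the critical-order remainder, compute its $L^{p}$-norm via the parabolic scaling (obtaining exactly $C_{\dag}$), discard the faster-decaying $m\sum_{k\ge1}\tfrac{t^{k}}{k!}(D_{x}^{\alpha}\p_{x})^{k}\p_{x}G$ terms using \eqref{G-est}, and conclude by the triangle inequality. The only difference is that the paper writes out the three cases separately while you package them uniformly with the $P_{0},P_{1}$ notation; mathematically the arguments are identical.
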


\begin{rem}
We note that the above formulas \eqref{cor-1}, \eqref{cor-2} and \eqref{cor-3} include not only the upper bound but also the lower bound of the $L^{p}$-norm. 
Therefore, \eqref{cor-1}, \eqref{cor-2} and \eqref{cor-3} give us the second, $(N+1)$th and $(N+2)$th order complete asymptotic expansion of $u(x, t)$ with the optimal asymptotic rate $t^{-\frac{1}{2}\left(1-\frac{1}{p}\right) -\frac{1}{2} }$, respectively. 
Here, we note that $(tD_{x}^{\alpha}\p_{x})^{k}\p_{x}G(x, t)$ for $k=1, 2, \cdots, N-1$ in \eqref{main-2} and for $k=1, 2, \cdots, N$ in \eqref{main-3} have been disappeared because these terms decay faster than $t^{ -\frac{1}{2}\left(1-\frac{1}{p}\right) - \frac{1}{2} }$. In other words, they essentially do not affect asymptotic behavior up to the decay order $t^{ -\frac{1}{2}\left(1-\frac{1}{p}\right) - \frac{1}{2} }$. 
\end{rem}

Finally in the rest of this section, let us make some comments for other related equations. 
We remark that in the paper \cite{K99-1} mentioned above, the similar result to \eqref{Karch-N} has also been obtained for the generalized Benjamin--Bona--Mahony--Burgers (BBM--Burgers) equation: 
\[
u_{t} - u_{xx} -u_{xxt} + u_{xxx}  + \p_{x}(u^{q}) =0, \ \ x\in \R, \ t>0. 
\]
In addition, that result was extended to the two-dimensional case by Prado--Zuazua \cite{PZ01}. 
Moreover, some results were also observed for other nonlinear parabolic equations where the solutions tend to the heat kernel and the logarithmic term appears in the asymptotic behavior.
For example, in Zuazua \cite{Z93}, the corresponding result to \eqref{Karch-N} also can be established for the following convection-diffusion equation in arbitrary space dimension:   
\begin{equation}\label{CD}
u_{t} - \Delta u =a\cdot\nabla  (u^{q}), \ \  x\in\mathbb{R}^n, \ t>0, 
\end{equation}
where $q>1$ and $a\in \R^{n}$. We note that the same situation of \eqref{Karch-N} (i.e. logarithmic term appears) occurs at only the case of $q=1+\frac{2}{n}$. Furthermore, it is known that the solutions $(u, v)$ to the following parabolic system of chemotaxis also goes to the heat kernel $G(x, t)$: 
\begin{align}\label{SC}
\begin{split}
&u_{t}=\Delta u-\nabla \cdot (u\nabla v), \ \ x\in \R^{n}, \ t>0, \\
&v_{t}=\Delta v-v+u, \ \ x\in \R^{n}, \ t>0. 
\end{split}
\end{align}
Especially, in Nagai--Yamada \cite{NY07}, the related result to \eqref{Karch-N} has been derived, in the case of $n=1$. For details about the asymptotic analysis for the above equations, see e.g. \cite{HKNS06, I-K, K99-1, KM09, NY07, PZ01, Y09, Z93} and also references therein. 
We believe that the our method developed in this paper also can be applied to the above problems. 

\section{Preliminaries}

In this section, we prepare a couple of lemmas and propositions to prove the main results. 
First of all, let us introduce some decay properties for the Green function $S_{\alpha}(x, t)$ defined by \eqref{def.S}. 
This function has the similar decay estimates as the heat kernel $G(x, t)$ defined by \eqref{def.G-M} (see \eqref{G-est} below). 
In addition, $S_{\alpha}(x, t)$ is well approximated by $G(x, t)$ as $t \to \infty$ when $\alpha>1$. 
More precisely, the following estimate \eqref{S-ap-1} holds true. 
For these proofs, see Lemmas 3.1 and 3.2 in \cite{K99-1}. 
The following two lemmas will be used in the proofs of Propositions \ref{prop.u-ap} and \ref{S-G-error} below: 
\begin{lem} \label{lem.S-est}
    Let $\alpha>1$ and $l \in \mathbb{N}\cup\{0\}$. 
    Then, for any $2\le p\le \infty$, we have
    \begin{align}
        &\left\| \p_{x}^{l} S_{\alpha}(t) \right\|_{L^{p}} \le C t^{-\frac{1}{2} \left( 1 - \frac{1}{p} \right) - \frac{l}{2}}, \ \ t>0, \label{S-est} \\
        &\left\| \p_{x}^{l} S_{\alpha}(t) \right\|_{L^{1}} \le C t^{- \frac{l}{2}}\left(1 + t^{-\frac{\alpha-1}{4}}\right), \ \ t>0.  \label{S-est-1} 
     \end{align}
\end{lem}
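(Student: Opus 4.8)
The plan is to work entirely on the Fourier side, exploiting that the dispersive factor in the symbol of $S_\alpha$ has unit modulus. By \eqref{def.S}, the Fourier transform of $\p_x^l S_\alpha(\cdot,t)$ equals $(i\xi)^l$ times a constant multiple of $e^{-t\xi^2+it|\xi|^\alpha\xi}$, so that $\left|\mathcal{F}[\p_x^l S_\alpha](\xi,t)\right| = C|\xi|^l e^{-t\xi^2}$ because $\left|e^{it|\xi|^\alpha\xi}\right|=1$. This is exactly the modulus of the symbol of $\p_x^l G(\cdot,t)$, which is the structural reason $S_\alpha$ inherits heat-kernel-type decay, and the dispersive phase will only enter through its $\xi$-derivative.

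For \eqref{S-est} I would first establish the two endpoints $p=2$ and $p=\infty$ and then interpolate. For $p=2$, Plancherel's theorem gives $\left\|\p_x^l S_\alpha(\cdot,t)\right\|_{L^2} = C\left\||\xi|^l e^{-t\xi^2}\right\|_{L^2}$, and the rescaling $\eta=\sqrt{t}\,\xi$ produces $Ct^{-\frac14-\frac{l}2}$, the claimed rate at $p=2$. For $p=\infty$, the bound $\|f\|_{L^\infty}\le C\|\mathcal{F}[f]\|_{L^1}$ yields $\left\|\p_x^l S_\alpha(\cdot,t)\right\|_{L^\infty}\le C\int_{\R}|\xi|^l e^{-t\xi^2}\,d\xi = Ct^{-\frac12-\frac{l}2}$ after the same rescaling. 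The intermediate range follows from the log-convexity inequality $\|f\|_{L^p}\le\|f\|_{L^2}^{2/p}\|f\|_{L^\infty}^{1-2/p}$, valid for $2\le p\le\infty$; inserting the two endpoint rates gives precisely the exponent $-\frac12\left(1-\frac1p\right)-\frac{l}2$ in \eqref{S-est}.

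For the $L^1$ bound \eqref{S-est-1}, which is the delicate part, the key analytic tool is the Carlson-type inequality $\|f\|_{L^1}\le C\|f\|_{L^2}^{1/2}\|xf\|_{L^2}^{1/2}$, obtained by splitting $\int_{\R}|f|=\int_{|x|\le R}+\int_{|x|>R}$, applying Cauchy--Schwarz on each piece, and optimizing in $R$. Applied to $f=\p_x^l S_\alpha(\cdot,t)$, it reduces matters to the weighted norm $\left\|x\p_x^l S_\alpha(\cdot,t)\right\|_{L^2}=\left\|\p_\xi\mathcal{F}[\p_x^l S_\alpha](\cdot,t)\right\|_{L^2}$. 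Differentiating the symbol in $\xi$ and using $\p_\xi\left(|\xi|^\alpha\xi\right)=(\alpha+1)|\xi|^\alpha$, continuous since $\alpha>1$, the modulus is bounded by $C\left(l|\xi|^{l-1}+t|\xi|^{l+1}+t|\xi|^{l+\alpha}\right)e^{-t\xi^2}$. Taking the $L^2$ norm of each term by the same rescaling gives $\left\|x\p_x^l S_\alpha(\cdot,t)\right\|_{L^2}\le C\left(t^{-\frac{l}2+\frac14}+t^{-\frac{l}2+\frac34-\frac\alpha2}\right)$, and combining this with the $L^2$ bound above through the Carlson inequality yields exactly $Ct^{-\frac{l}2}\left(1+t^{-\frac{\alpha-1}4}\right)$.

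The main obstacle is precisely this last step: a naive bound such as $\|f\|_{L^1}\le C\left(\|f\|_{L^2}+\|xf\|_{L^2}\right)$ is too lossy and introduces a spurious growing factor $t^{1/4}$ for large $t$, failing to reproduce the constant-in-$t$ leading behavior $Ct^{-l/2}$. It is essential to use the multiplicative form so that the $t^{-1/4}$ from $\|f\|_{L^2}$ and the $t^{1/4}$ from the leading part of $\|xf\|_{L^2}$ cancel; the subleading term $t^{3/4-\alpha/2}$ in $\|xf\|_{L^2}$, produced solely by the dispersive phase through the factor $|\xi|^\alpha$, is then responsible for the correction $t^{-(\alpha-1)/4}$, and $\alpha>1$ makes it lower order at infinity. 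One must also check that all Gaussian moment integrals converge, noting that the lowest power $|\xi|^{l-1}$ occurs only for $l\ge1$ and is therefore integrable, and that every estimate is exact for all $t>0$ rather than merely asymptotic.
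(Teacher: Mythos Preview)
Your argument is correct. The paper does not supply its own proof of this lemma but defers to \cite{K99-1}; your approach---Plancherel plus interpolation for \eqref{S-est}, and the Carlson-type inequality $\|f\|_{L^{1}}\le C\|\hat f\|_{L^{2}}^{1/2}\|\p_{\xi}\hat f\|_{L^{2}}^{1/2}$ for \eqref{S-est-1}---is precisely the inequality the paper records as \eqref{SI} and uses in the proof of Lemma~\ref{lem.G-est} for the analogous heat-kernel bounds, so this is the expected route.
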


\begin{lem} \label{lem.S-ap}
    Let $\alpha>1$ and $l \in \mathbb{N}\cup\{0\}$.  
    Then, for any $1\le p\le \infty$, we have
    \begin{align}
        \left\| \p_{x}^{l} \left( S_{\alpha}- G\right)(t)\right\|_{L^{p}} \le C t^{-\frac{1}{2}\left( 1 - \frac{1}{p} \right) - \frac{\alpha-1}{2} -\frac{l}{2}}, \ \ t>0. \label{S-ap-1} 
    \end{align}
\end{lem}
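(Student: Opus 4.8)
\textbf{Proof proposal for Lemma~\ref{lem.S-ap}.}

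The plan is to work entirely on the Fourier side, where the difference $S_\alpha - G$ is explicit. By the definitions \eqref{def.S} and \eqref{def.G-M}, we have $\widehat{S_\alpha}(\xi, t) = e^{-t\xi^2 + it|\xi|^\alpha \xi}$ (up to the normalization constant) and $\widehat{G}(\xi, t) = e^{-t\xi^2}$, so that
\[
\mathcal{F}\!\left[\p_x^l\bigl(S_\alpha(\cdot, t) - G(\cdot, t)\bigr)\right](\xi)
= (i\xi)^l e^{-t\xi^2}\left(e^{it|\xi|^\alpha \xi} - 1\right).
\]
First I would treat the case $p = \infty$, which follows from the Hausdorff--Young inequality $\|f\|_{L^\infty} \le C\|\hat f\|_{L^1}$: using the elementary bound $|e^{i\theta} - 1| \le C|\theta|^{\delta}$ for any $0 \le \delta \le 1$ (here I would eventually take $\delta$ so that $\alpha\delta$ matches the target exponent, but $\delta=1$, i.e.\ $|e^{i\theta}-1|\le|\theta|$, is the natural choice), one gets
\[
\left\|\p_x^l\bigl(S_\alpha(\cdot, t) - G(\cdot, t)\bigr)\right\|_{L^\infty}
\le C \int_{\R} |\xi|^l e^{-t\xi^2} |\xi|^{\alpha+1}\, d\xi,
\]
and the substitution $\eta = \sqrt{t}\,\xi$ produces exactly $t^{-\frac{1}{2}(l + \alpha + 1) - \frac{1}{2}} = t^{-\frac{1}{2} - \frac{\alpha-1}{2} - \frac{l}{2}}$, which is \eqref{S-ap-1} at $p=\infty$. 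For $p = 2$ I would instead use Plancherel's theorem directly: $\|\p_x^l(S_\alpha - G)(\cdot,t)\|_{L^2} = \||\xi|^l e^{-t\xi^2}(e^{it|\xi|^{\alpha}\xi}-1)\|_{L^2_\xi}$, and the same pointwise bound and rescaling give the exponent $-\frac14 - \frac{\alpha-1}{2} - \frac{l}{2}$, matching \eqref{S-ap-1} at $p=2$. The general range $2 \le p \le \infty$ then follows by interpolation (Riesz--Thorin, or just $\|f\|_{L^p}\le\|f\|_{L^2}^{2/p}\|f\|_{L^\infty}^{1-2/p}$), and the remaining range $1 \le p < 2$ is handled by the Hausdorff--Young inequality $\|f\|_{L^p} \le C\|\hat f\|_{L^{p'}}$ with $p' = p/(p-1) \in (2,\infty]$, again applying the pointwise bound and rescaling, so that the $L^{p'}_\xi$-norm of $|\xi|^{l+\alpha+1}e^{-t\xi^2}$ scales as $t^{-\frac12(l+\alpha+1) - \frac{1}{2}(1 - \frac{1}{p'})} = t^{-\frac{1}{2}(1-\frac1p) - \frac{\alpha-1}{2} - \frac{l}{2}}$.

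The one genuinely delicate point — and the main obstacle — is ensuring the $\xi$-integrals (or $L^{p'}_\xi$-norms) actually converge, i.e.\ controlling the behavior where the Gaussian factor $e^{-t\xi^2}$ is not helpful. Since $\alpha < 3$, the exponent $|\xi|^{l+\alpha+1}$ grows only polynomially and $e^{-t\xi^2}$ kills it at infinity for every fixed $t>0$, so convergence is never in question for large $|\xi|$; near $\xi = 0$ the integrand vanishes. The subtlety is really just that one must be slightly careful extracting the clean power of $t$ uniformly: the rescaling $\eta = \sqrt t\,\xi$ turns $\int |\xi|^{l+\alpha+1} e^{-t\xi^2}\,d\xi$ into $t^{-\frac{l+\alpha+2}{2}}\int|\eta|^{l+\alpha+1}e^{-\eta^2}\,d\eta$ with a finite, $t$-independent constant, and analogously in $L^{p'}_\xi$, so no uniformity issue actually arises. (This is presumably the role of the hypothesis $\alpha < 3$ alluded to in the second Remark, although for this particular lemma $\alpha<3$ is not strictly needed — any $\alpha>1$ works — so I would simply remark that the cited Lemmas~3.1--3.2 of \cite{K99-1} already contain this estimate and the above is the short self-contained argument.) Collecting the four regimes ($p=\infty$, $p=2$, interpolation for $2<p<\infty$, Hausdorff--Young for $1\le p<2$) completes the proof. $\qed$
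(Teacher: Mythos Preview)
Your argument for $p=2$ and $p=\infty$ is essentially correct (modulo a minor slip: you dropped the factor $t$ coming from $|e^{it|\xi|^{\alpha}\xi}-1|\le t|\xi|^{\alpha+1}$, and then made a compensating arithmetic error in the exponent, so the final power of $t$ happens to be right). The genuine gap is the range $1\le p<2$. The inequality you invoke there, $\|f\|_{L^{p}}\le C\|\hat f\|_{L^{p'}}$ for $1\le p<2$, is simply false: Hausdorff--Young goes only in the direction $\|\hat f\|_{L^{p'}}\le C\|f\|_{L^{p}}$ for $1\le p\le 2$, equivalently $\|g\|_{L^{q}}\le C\|\hat g\|_{L^{q'}}$ for $q\ge 2$. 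There is no general bound on $\|g\|_{L^{1}}$ in terms of $\|\hat g\|_{L^{\infty}}$, so your scheme does not reach $p=1$ (or any $p<2$).

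The paper itself does not give a proof of this lemma --- it refers to Lemmas~3.1--3.2 of \cite{K99-1} --- but the method it uses for the neighboring Lemma~\ref{lem.G-est} shows the standard fix: to get $p=1$ one uses the Carlson-type inequality \eqref{SI}, namely $\|\mathcal{F}^{-1}[f]\|_{L^{1}}\le C\|f\|_{L^{2}}^{1/2}\|\p_{\xi}f\|_{L^{2}}^{1/2}$, applied to $f(\xi,t)=(i\xi)^{l}e^{-t\xi^{2}}(e^{it|\xi|^{\alpha}\xi}-1)$. Differentiating in $\xi$ (here $\alpha>1$ guarantees $|\xi|^{\alpha}\xi\in C^{1}$) and bounding each term with $|e^{i\theta}-1|\le|\theta|$ as before, the same rescaling $\eta=\sqrt{t}\,\xi$ yields the $L^{1}$ exponent $-\frac{\alpha-1}{2}-\frac{l}{2}$. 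With $p=1$ and $p=\infty$ in hand, the full range follows from the interpolation \eqref{inter}, $\|g\|_{L^{p}}\le\|g\|_{L^{\infty}}^{1-1/p}\|g\|_{L^{1}}^{1/p}$, rather than from Hausdorff--Young.
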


Next, we shall introduce the $L^{p}$-decay estimate for $G(x, t)$ defined by \eqref{def.G-M}. 
The following estimate \eqref{G-est} with $k=0$ is well known (see e.g. \cite{GGS99}). 
In what follows, we would like to introduce a slightly generalized estimate including the fractional derivatives $(D_{x}^{\alpha}\p_{x})^{k}$. 
Since $G(x,t)$ has the parabolic self-similarity, some may think that the following lemma is clear.
However, let us give its proof for the reader's convenience. 
\begin{lem} \label{lem.G-est}
    Let $\alpha>1$ and $k, l \in \mathbb{N}\cup\{0\}$.
    Then, for any $1\le p\le \infty$, we have
    \begin{equation}\label{G-est}
        \left\| (D_{x}^{\alpha}\p_{x})^{k}\p_{x}^{l} G(t) \right\|_{L^{p}} \le C t^{-\frac{1}{2}\left( 1 - \frac{1}{p} \right) - \frac{k(\alpha+1)}{2} - \frac{l}{2} }, \ \ t > 0. 
    \end{equation}
\end{lem}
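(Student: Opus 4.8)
The plan is to prove \eqref{G-est} by working on the Fourier side and using the scaling structure of the heat kernel. First I would write $\widehat{G}(\xi,t)=\frac{1}{\sqrt{2\pi}}e^{-t\xi^2}$, so that
\[
\mathcal{F}\left[(D_x^\alpha\p_x)^k\p_x^l G(\cdot,t)\right](\xi)
=\frac{1}{\sqrt{2\pi}}\,(i|\xi|^\alpha\xi)^k(i\xi)^l e^{-t\xi^2}
\]
up to an irrelevant unimodular factor, hence its modulus is $\frac{1}{\sqrt{2\pi}}|\xi|^{k(\alpha+1)+l}e^{-t\xi^2}$. For $p=\infty$ the bound $\|f\|_{L^\infty}\le C\|\widehat f\|_{L^1}$ reduces the claim to estimating $\int_\R |\xi|^{k(\alpha+1)+l}e^{-t\xi^2}\,d\xi$, which by the substitution $\eta=\sqrt{t}\,\xi$ equals $t^{-\frac{1}{2}(k(\alpha+1)+l)-\frac12}\int_\R|\eta|^{k(\alpha+1)+l}e^{-\eta^2}\,d\eta$; the remaining integral converges since the exponent $k(\alpha+1)+l\ge 0$, giving exactly the claimed power $-\frac12(1-\frac1\infty)-\frac{k(\alpha+1)}{2}-\frac l2=-\frac12-\frac{k(\alpha+1)}{2}-\frac l2$. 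For $p=2$ the same scaling applied to Plancherel, $\|(D_x^\alpha\p_x)^k\p_x^lG(\cdot,t)\|_{L^2}=\|\,|\xi|^{k(\alpha+1)+l}e^{-t\xi^2}/\sqrt{2\pi}\,\|_{L^2}$, yields $Ct^{-\frac12(k(\alpha+1)+l)-\frac14}$, matching $-\frac12(1-\frac12)-\frac{k(\alpha+1)}{2}-\frac l2=-\frac14-\frac{k(\alpha+1)}{2}-\frac l2$.

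Next I would interpolate: the Riesz--Thorin interpolation theorem between the $L^2$ and $L^\infty$ bounds just obtained gives the estimate for every $2\le p\le\infty$ with the exponent $-\frac12(1-\frac1p)-\frac{k(\alpha+1)}{2}-\frac l2$, since the $t$-exponent is affine in $1/p$. Alternatively, and perhaps more transparently, I would avoid interpolation entirely by exploiting the self-similar form directly in physical space: from $G(x,t)=t^{-1/2}G_*(x/\sqrt t)$ with $G_*(y)=\frac{1}{\sqrt{4\pi}}e^{-y^2/4}$, one gets $(D_x^\alpha\p_x)^k\p_x^l G(x,t)=t^{-\frac12-\frac{k(\alpha+1)}{2}-\frac l2}\,\big((D_y^\alpha\p_y)^k\p_y^l G_*\big)(x/\sqrt t)$, and then $\|(D_x^\alpha\p_x)^k\p_x^l G(\cdot,t)\|_{L^p}=t^{-\frac12-\frac{k(\alpha+1)}{2}-\frac l2+\frac{1}{2p}}\,\|(D_y^\alpha\p_y)^k\p_y^l G_*\|_{L^p}$ by a change of variables, which is precisely \eqref{G-est} provided the constant $\|(D_y^\alpha\p_y)^k\p_y^l G_*\|_{L^p}$ is finite for all $1\le p\le\infty$. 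For $1\le p<2$ I would therefore need the additional input that $(D_y^\alpha\p_y)^k\p_y^l G_*\in L^1(\R)$, which follows because its Fourier transform $|\eta|^{k(\alpha+1)+l}e^{-\eta^2}$ is in $L^1$ and decays rapidly, so the function itself is smooth with Schwartz-type decay away from the origin (the only possible singularity, at $x=0$ for non-integer $\alpha$, is integrable since $|x|^{k(\alpha+1)+l}$ has a non-negative power).

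The main obstacle I anticipate is the low-integrability range $1\le p<2$ combined with the fractional derivative $D_x^\alpha$: one must argue carefully that $(D_x^\alpha\p_x)^k\p_x^l G_*$, although not a Schwartz function when $\alpha\notin\mathbb N$, still lies in $L^p$ for all $p\ge1$. The clean way is to split $\mathcal{F}^{-1}[|\eta|^{k(\alpha+1)+l}e^{-\eta^2}]$ using a smooth cutoff near $\eta=0$: the high-frequency piece is the inverse transform of a Schwartz function (rapid decay, smooth), hence in every $L^p$; the low-frequency piece has compactly supported Fourier transform with a possible mild non-smoothness at the origin of order $|\eta|^{k(\alpha+1)+l}$ (a power $\ge0$), which produces at worst polynomial decay of order $|x|^{-1-k(\alpha+1)-l}$ at spatial infinity — integrable since the exponent exceeds $1$ — and no singularity in $x$. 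Hence the $L^p$ norm of the profile is finite for all $p$, and the self-similar scaling argument above completes the proof for the full range $1\le p\le\infty$ in one stroke.
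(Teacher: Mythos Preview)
Your argument is correct and takes a genuinely different route from the paper's. Both get the $L^2$ bound via Plancherel, but the endpoints are handled differently: the paper obtains $L^\infty$ from the Sobolev inequality $\|f\|_{L^\infty}\le\sqrt{2}\,\|f\|_{L^2}^{1/2}\|\p_x f\|_{L^2}^{1/2}$ (applied with the $L^2$ estimates at levels $l$ and $l+1$), and obtains $L^1$ from the Carlson-type inequality
\[
\|\mathcal{F}^{-1}[f]\|_{L^1}\le C\|f\|_{L^2}^{1/2}\|\p_\xi f\|_{L^2}^{1/2}
\]
applied to $f(\xi)=\{|\xi|^\alpha(i\xi)\}^k(i\xi)^l e^{-t\xi^2}$, after checking by hand that $\p_\xi f\in L^2$; it then interpolates between $L^1$ and $L^\infty$. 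Your self-similar scaling reduction to the finiteness of $\|(D_y^\alpha\p_y)^k\p_y^l G_*\|_{L^p}$ is more conceptual --- the correct $t$-exponent drops out of the change of variables immediately --- and your low/high-frequency split for the $L^1$ bound of the profile is a legitimate alternative. Two small caveats: your parenthetical about a ``possible singularity at $x=0$'' is misplaced, since the inverse transform of an $L^1$ symbol is continuous everywhere; and the decay claim $|x|^{-1-k(\alpha+1)-l}$ for the low-frequency piece, while correct for $k\ge1$, deserves one line of justification (e.g.\ integration by parts in $\xi$, or the homogeneity of the distributional Fourier transform of $|\xi|^\gamma$), with the case $k=0$ handled separately as trivial since the symbol is then Schwartz. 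The paper's Carlson inequality is the quicker, more self-contained tool here; your scaling argument has the advantage of making the self-similar structure transparent and would adapt more readily to other kernels.
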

\begin{proof}
First, it follows from the Fourier transform that 
\[
(D_{x}^{\alpha}\p_{x})^{k}\p_{x}^{l} G(x, t) =\frac{1}{2\pi}\int_{\R}\left\{|\xi|^{\alpha}(i\xi)\right\}^{k}(i\xi)^{l}e^{-t\xi^{2}+ix\xi}d\xi. 
\]
Therefore, we immediately obtain \eqref{G-est} for $p=\infty$ as follows: 
\begin{align}
\left\|(D_{x}^{\alpha}\p_{x})^{k}\p_{x}^{l}G(t)\right\|_{L^{\infty}}
\le \int_{\R}|\xi|^{k(\alpha+1)+l}e^{-t\xi^{2}}d\xi
\le Ct^{-\frac{1}{2} - \frac{k(\alpha+1)}{2} -\frac{l}{2} }, \ \ t>0. \label{G-frac-Linf}
\end{align}

Next, let us prove \eqref{G-est} for $p=1$. Here, in order to handle the $L^{1}$-norm, we shall use the following inequality (cf. \cite{K97-1, K97-2}): 
\begin{equation*}
\left\|\mathcal{F}^{-1}[g]\right\|_{L^{1}}\le C\left\|g\right\|_{L^{2}}^{\frac{1}{2}}\left\|\p_{\xi}g\right\|_{L^{2}}^{\frac{1}{2}}, \ \ g\in H^{1}(\R).
\end{equation*}
Now, applying this inequality for $g(\xi, t):=\frac{1}{\sqrt{2\pi}}\left\{|\xi|^{\alpha}(i\xi)\right\}^{k}(i\xi)^{l}e^{-t\xi^{2}}$, we get 
\begin{align}
&\left\|(D_{x}^{\alpha}\p_{x})^{k}\p_{x}^{l}G(t)\right\|_{L^{1}}
\le C\left\|g(t)\right\|_{L^{2}}^{\frac{1}{2}}\left\|\p_{\xi}g(t)\right\|_{L^{2}}^{\frac{1}{2}} \nonumber \\
&\le C\left(\int_{\R}\xi^{2k(\alpha+1)+2l}e^{-2t\xi^{2}}d\xi \right)^{\frac{1}{4}}
\left(\int_{\R}\left\{\xi^{2k(\alpha+1)-2+2l}+t^{2}\xi^{2k(\alpha+1)+2+2l}\right\}e^{-2t\xi^{2}}d\xi \right)^{\frac{1}{4}} \nonumber \\
&\le C\left(t^{-\frac{1}{2}-k(\alpha+1)-l}\right)^{\frac{1}{4}}\left\{ \left(t^{-\frac{1}{2}-k(\alpha+1)+1-l}\right)^{\frac{1}{4}} + t^{\frac{1}{2}} \left(t^{-\frac{1}{2}-k(\alpha+1)-1-l}\right)^{\frac{1}{4}} \right\} \nonumber \\
&\le Ct^{-\frac{k(\alpha+1)}{2}-\frac{l}{2}}, \ \ t>0. \label{G-frac-L1}
\end{align}
Thus, we obtain \eqref{G-est} from \eqref{G-frac-Linf} and \eqref{G-frac-L1} through the interpolation inequality: 
\begin{equation}\label{inter}
\left\|g\right\|_{L^{p}} \le \left\|g\right\|_{L^{\infty}}^{1-\frac{1}{p}}\left\|g\right\|_{L^{1}}^{\frac{1}{p}}, \ \ 1\le p\le \infty.
\end{equation}
This completes the proof. 
\end{proof}

Moreover, we would like to introduce the following asymptotic formula for the solution $(G(t)*u_{0})(x)$ to the linear heat equation (for the proof, see e.g. \cite{GGS99, K99-1}): 
\begin{lem} \label{lem.G-ap}
    Let $l \in \mathbb{N}\cup\{0\}$. Suppose $u_{0}\in L^{1}(\R)$ and $xu_{0}\in L^{1}(\R)$. 
        Then, for any $1\le p\le \infty$, we have
            \begin{equation}\label{G-ap}
        \left\| \p_{x}^{l} \left( G(t)*u_{0}-MG(t) \right) \right\|_{L^{p}} \le C t^{-\frac{1}{2}\left( 1 - \frac{1}{p} \right) - \frac{1}{2} - \frac{l}{2}}, \ \ t > 0, 
    \end{equation}
    where $G(x, t)$ and $M$ are defined by \eqref{def.G-M}.
\end{lem}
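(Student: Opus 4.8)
The goal is to prove Lemma~\ref{lem.G-ap}, the first-order asymptotic expansion of the heat semigroup applied to $u_{0}$, namely that $\p_{x}^{l}\left((G(t)*u_{0}) - MG(t)\right)$ decays like $t^{-\frac{1}{2}(1-\frac{1}{p}) - \frac{1}{2} - \frac{l}{2}}$ in $L^{p}$.

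Plan:

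First I would write out the difference explicitly using the convolution structure. Since $M = \int_{\R} u_{0}(y)\,dy$, we have
\[
\left(G(t)*u_{0}\right)(x) - MG(x,t) = \int_{\R} \left( G(x-y, t) - G(x, t) \right) u_{0}(y)\, dy,
\]
and hence
\[
\p_{x}^{l}\left(\left(G(t)*u_{0}\right)(\cdot) - MG(\cdot, t)\right)(x) = \int_{\R} \left( \p_{x}^{l}G(x-y, t) - \p_{x}^{l}G(x, t) \right) u_{0}(y)\, dy.
\]
The key idea is then the elementary Taylor/mean-value estimate: for each fixed $y$,
\[
\p_{x}^{l}G(x-y,t) - \p_{x}^{l}G(x,t) = -y\int_{0}^{1} \p_{x}^{l+1}G(x-\theta y, t)\, d\theta,
\]
so that, applying Minkowski's integral inequality in $L^{p}_{x}$ twice (once over $y$, once over $\theta$) and using translation invariance of the $L^{p}$-norm,
\[
\left\| \p_{x}^{l}\left(\left(G(t)*u_{0}\right)(\cdot) - MG(\cdot, t)\right) \right\|_{L^{p}} \le \int_{\R} |y|\,|u_{0}(y)|\,dy \cdot \left\| \p_{x}^{l+1} G(\cdot, t) \right\|_{L^{p}} = \| x u_{0} \|_{L^{1}} \left\| \p_{x}^{l+1} G(\cdot, t) \right\|_{L^{p}}.
\]
Finally I would invoke Lemma~\ref{lem.G-est} with $k=0$ and the derivative order $l+1$, which gives $\left\| \p_{x}^{l+1} G(\cdot, t) \right\|_{L^{p}} \le C t^{-\frac{1}{2}(1-\frac{1}{p}) - \frac{l+1}{2}} = C t^{-\frac{1}{2}(1-\frac{1}{p}) - \frac{1}{2} - \frac{l}{2}}$, and since $\| x u_{0} \|_{L^{1}} < \infty$ by hypothesis, this yields exactly \eqref{G-ap}.

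There is really no serious obstacle here; the only point requiring a little care is the justification of the Taylor expansion with integral remainder and the interchange of integration (Fubini/Minkowski), which is legitimate because $G$ is smooth and rapidly decaying in $x$ for each $t>0$ and $u_{0}\in L^{1}\cap L^{1}(\R; |x|dx)$. One could alternatively phrase the estimate on the Fourier side, writing the difference as $\mathcal{F}^{-1}\big[(i\xi)^{l}(\widehat{u_{0}}(\xi) - \widehat{u_{0}}(0))e^{-t\xi^{2}}\big]$ and using $|\widehat{u_{0}}(\xi) - \widehat{u_{0}}(0)| \le C|\xi|\,\|xu_{0}\|_{L^{1}}$ together with the inequality \eqref{SI} for the $L^{1}$ bound and Plancherel for the $L^{2}$ bound, then interpolating via \eqref{inter}; but the physical-space argument above is cleaner and self-contained.
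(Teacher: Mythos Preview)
Your proof is correct. The paper does not actually prove this lemma --- it simply refers the reader to \cite{GGS99, K99-1} --- and the physical-space Taylor expansion argument you give (writing $G(x-y,t)-G(x,t)$ via the mean-value theorem, then Minkowski plus translation invariance, then Lemma~\ref{lem.G-est}) is precisely the standard route found in those references.
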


Finally, in the rest of this section, let us derive an important result which related to the approximation for the nonlinear term $u^{3}(x, t)$. Actually, the proof of Theorem \ref{thm.Duhamel} needs the following proposition: 
\begin{prop} \label{prop.u-ap}
    Let $1<\alpha<3$ and $\beta \in \R$. Assume that $u_{0} \in H^{1}(\R) \cap L^{1}(\R)$, $xu_{0} \in L^{1}(\R)$ and $\| u_0 \|_{H^{1}} + \| u_0 \|_{L^{1}}$ is sufficiently small.
        Then, the solution $u(x, t)$ to \eqref{KdVB} satisfies
     \begin{equation} \label{u^3-ap-est}
        \left\| u^{3}(t) - (M G)^{3}(t) \right\|_{L^{p}} \le Ct^{-\frac{1}{2}\left(1-\frac{1}{p}\right)-1} \left\{t^{-\frac{\min\{\alpha-1, 1\}}{2} }+t^{-\frac{1}{2}}\log(2 + t)\right\}, \ \ t\ge1,
    \end{equation}
    for any $1\le p \le \infty$, where $G(x, t)$ and $M$ are defined by \eqref{def.G-M}.
\end{prop}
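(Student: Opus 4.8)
The plan is to exploit the elementary factorization $u^{3} - (MG)^{3} = (u - MG)(u^{2} + u\,MG + (MG)^{2})$ in order to reduce the claim to an $L^{p}$ estimate for the single difference $u(\cdot,t) - MG(\cdot,t)$. Indeed, by H\"older's inequality,
\[
\left\| u^{3}(\cdot,t) - (MG)^{3}(\cdot,t)\right\|_{L^{p}} \le \left\| u(\cdot,t) - MG(\cdot,t)\right\|_{L^{p}}\left(\|u(\cdot,t)\|_{L^{\infty}}^{2} + \|u(\cdot,t)\|_{L^{\infty}}\|MG(\cdot,t)\|_{L^{\infty}} + \|MG(\cdot,t)\|_{L^{\infty}}^{2}\right),
\]
and by the $L^{\infty}$-decay in \eqref{u-decay-Lp} together with $\|MG(\cdot,t)\|_{L^{\infty}}\le Ct^{-1/2}$, the bracketed factor is $\le Ct^{-1}$ for $t\ge1$. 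Hence it will suffice to prove
\[
\left\| u(\cdot,t) - MG(\cdot,t)\right\|_{L^{p}} \le Ct^{-\frac12\left(1-\frac1p\right)}\left\{t^{-\frac{\min\{\alpha-1,1\}}{2}} + t^{-\frac12}\log(2+t)\right\}, \quad t\ge1.
\]

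To this end I would split, using the integral equation \eqref{KdVB-IE},
\[
u(\cdot,t) - MG(\cdot,t) = \underbrace{\bigl(S_{\alpha}(t) - G(t)\bigr)* u_{0}}_{A(t)} + \underbrace{G(t)* u_{0} - MG(\cdot,t)}_{B(t)} + I_{\alpha,\beta}[u](\cdot,t).
\]
For $A(t)$, Young's inequality and Lemma~\ref{lem.S-ap} (with $l=0$) give $\|A(t)\|_{L^{p}} \le \|S_{\alpha}(t) - G(t)\|_{L^{p}}\|u_{0}\|_{L^{1}} \le Ct^{-\frac12(1-\frac1p) - \frac{\alpha-1}{2}}$, and since $\frac{\alpha-1}{2}\ge\frac{\min\{\alpha-1,1\}}{2}$ this is absorbed into the first term of the target bound for $t\ge1$. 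For $B(t)$, Lemma~\ref{lem.G-ap} (with $l=0$) gives the even better bound $\|B(t)\|_{L^{p}} \le Ct^{-\frac12(1-\frac1p) - \frac12}$.

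The remaining --- and main --- work is to estimate the Duhamel term $I_{\alpha,\beta}[u](\cdot,t) = -\frac{\beta}{3}\int_{0}^{t}\p_{x}S_{\alpha}(t-\tau)* u^{3}(\tau)\,d\tau$, which I would split as $\int_{0}^{t/2} + \int_{t/2}^{t}$. On $[0,t/2]$ I would keep the derivative on $S_{\alpha}$ and use Young's inequality in the form $\|\p_{x}S_{\alpha}(t-\tau)* u^{3}(\tau)\|_{L^{p}} \le \|\p_{x}S_{\alpha}(t-\tau)\|_{L^{p}}\|u^{3}(\tau)\|_{L^{1}}$; by Lemma~\ref{lem.S-est} (interpolated down to $p=1$ against \eqref{S-est-1} via \eqref{inter}, and using $t-\tau\ge t/2$) one has $\|\p_{x}S_{\alpha}(t-\tau)\|_{L^{p}}\le C(t-\tau)^{-\frac12(1-\frac1p)-\frac12}\le Ct^{-\frac12(1-\frac1p)-\frac12}$, while $\|u^{3}(\tau)\|_{L^{1}} = \|u(\tau)\|_{L^{3}}^{3}\le C(1+\tau)^{-1}$ by \eqref{u-decay-Lp}; integrating $(1+\tau)^{-1}$ in $\tau$ yields the factor $\log(2+t)$ and a contribution $\le Ct^{-\frac12(1-\frac1p)-\frac12}\log(2+t)$. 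On $[t/2,t]$ I would instead use Young's inequality in the form $\|\p_{x}S_{\alpha}(t-\tau)* u^{3}(\tau)\|_{L^{p}} \le \|\p_{x}S_{\alpha}(t-\tau)\|_{L^{1}}\|u^{3}(\tau)\|_{L^{p}}$, with $\|u^{3}(\tau)\|_{L^{p}} = \|u(\tau)\|_{L^{3p}}^{3}\le C(1+\tau)^{-\frac32+\frac{1}{2p}} = C(1+\tau)^{-1-\frac12(1-\frac1p)}\le Ct^{-1-\frac12(1-\frac1p)}$ (by \eqref{u-decay-Lp}, since $\tau\ge t/2$) and $\|\p_{x}S_{\alpha}(t-\tau)\|_{L^{1}}\le C(t-\tau)^{-1/2}(1+(t-\tau)^{-\frac{\alpha-1}{4}})$ by \eqref{S-est-1}; after the change of variables $s=t-\tau$ the factor $\int_{0}^{t/2}s^{-1/2}(1+s^{-\frac{\alpha-1}{4}})\,ds$ is finite precisely because $\alpha<3$ and is $\le Ct^{1/2}$ for $t\ge1$, giving a contribution $\le Ct^{-\frac12(1-\frac1p)-\frac12}$. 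Collecting the two pieces gives $\|I_{\alpha,\beta}[u](\cdot,t)\|_{L^{p}}\le Ct^{-\frac12(1-\frac1p)-\frac12}\log(2+t)$, which combined with the bounds on $A(t)$ and $B(t)$ establishes the reduced claim and hence \eqref{u^3-ap-est}.

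The main obstacle, as indicated, is the endpoint $\tau\to t$ of the Duhamel integral: the kernel $\|\p_{x}S_{\alpha}(s)\|_{L^{1}}$ behaves like $s^{-1/2-(\alpha-1)/4}$ as $s\to0$, and integrability of this singularity is exactly what the hypothesis $\alpha<3$ provides (this is the only place it is used). A secondary nuisance is covering the full range $1\le p\le\infty$: Lemma~\ref{lem.S-est} is stated for $2\le p\le\infty$, so for $1\le p<2$ one interpolates it against the $L^{1}$-bound \eqref{S-est-1} using \eqref{inter}.
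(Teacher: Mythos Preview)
Your proof is correct and follows essentially the same strategy as the paper: factor $u^{3}-(MG)^{3}$, reduce to bounding $\|u-MG\|_{L^{p}}$, split via the integral equation into the linear piece (handled by Lemmas~\ref{lem.S-ap} and~\ref{lem.G-ap}) and the Duhamel piece, and estimate the latter by splitting the time integral at $t/2$. The only minor tactical difference is that the paper bounds $\|I_{\alpha,\beta}[u]\|_{L^{\infty}}$ and $\|I_{\alpha,\beta}[u]\|_{L^{1}}$ separately (using an $L^{2}\times L^{2}$ Young pairing on $[t/2,t]$ for the former) and then interpolates via \eqref{inter}, whereas you work directly in $L^{p}$ using $L^{p}\times L^{1}$ and $L^{1}\times L^{p}$ pairings; both routes are equally valid and rely on $\alpha<3$ at exactly the same point.
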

\begin{proof}
    First, we shall prove that the solution $u(x, t)$ to \eqref{KdVB} satisfies
    \begin{equation} \label{u-ap-est}
        \left\| u(t) - M G(t) \right\|_{L^{p}} \le Ct^{-\frac{1}{2}\left(1-\frac{1}{p}\right)} \left\{t^{-\frac{\min\{\alpha-1, 1\}}{2} }+t^{-\frac{1}{2}}\log(2 + t)\right\}, \ \ t\ge1, 
    \end{equation}
    for any $1\le p \le \infty$. From \eqref{KdVB-IE}, we can see that the following relation holds: 
    \begin{equation}\label{u-MG}
            u(x, t) - M G(x, t) = \left(S_{\alpha}(t)*u_{0}\right)(x) - MG(x, t) + I_{\alpha, \beta}[u](x, t). 
    \end{equation}
   By virtue of Lemmas \ref{lem.S-ap} and \ref{lem.G-ap}, from the Young inequality, we obtain 
   \begin{align}
   &\left\|S_{\alpha}(t)*u_{0} - MG(t)\right\|_{L^{p}}  \nonumber \\
   & \le \left\|S_{\alpha}(t)-G(t)\right\|_{L^{p}}\left\|u_{0}\right\|_{L^{1}} + \left\|G(t)*u_{0}- MG(t)\right\|_{L^{p}} \nonumber \\
   &\le C t^{-\frac{1}{2}\left( 1 - \frac{1}{p} \right) - \frac{\alpha-1}{2}} + C t^{-\frac{1}{2}\left( 1 - \frac{1}{p} \right) - \frac{1}{2}}
   \le Ct^{-\frac{1}{2}\left(1-\frac{1}{p}\right) -\frac{\min\{\alpha-1, 1\}}{2} }, \ \ t\ge1, \label{u-ap-pre}
   \end{align}   
   for any $1\le p \le \infty$. Therefore, in order to prove \eqref{u-ap-est}, we only need to evaluate the Duhamel term $I_{\alpha, \beta}[u](x, t)$ in \eqref{u-MG}. In order to do that, let us prepare an auxiliary estimates for the nonlinear term $u^{3}(x, t)$. For any $1\le p \le \infty$, it follows from \eqref{u-decay-Lp} that 
    \begin{align}
            \left\| u^3(t) \right\|_{L^{p}} 
            \le \left\| u(t) \right\|_{L^{\infty}} \left\| u(t) \right\|_{L^{2p}}^{2} 
            \le C(1 + t)^{-\frac{1}{2}\left(1-\frac{1}{p}\right)-1}, \ \ t\ge0.  \label{u^3-est-Lp}
    \end{align}
    
Now, let us evaluate the Duhamel term $I_{\alpha, \beta}[u](x, t)$. 
From the Young inequality, \eqref{S-est}, \eqref{S-est-1} and \eqref{u^3-est-Lp}, we have  
    \begin{align}
            \left\|I_{\alpha, \beta}[u](t)\right\|_{L^{p}}  
            &\le C\int_{0}^{\frac{t}{2}} \left\| \p_{x}S_{\alpha}(t - \tau) \right\|_{L^{p}} \left\| u^{3}(\tau) \right\|_{L^{1}} d\tau  \nonumber\\
            &\quad + C\int_{\frac{t}{2}}^{t} \left\| \p_{x} S_{\alpha}(t - \tau) \right\|_{L^{1}} \left\| u^{3}(\tau) \right\|_{L^{p}} d\tau  \nonumber\\
            &\le C \int_{0}^{\frac{t}{2}} (t - \tau)^{- \frac{1}{2}\left(1-\frac{1}{p}\right)-\frac{1}{2}} \left(1+(t-\tau)^{-\frac{\alpha-1}{4}}\right) (1 + \tau)^{-1}d\tau \nonumber \\
            &\quad + C \int_{\frac{t}{2}}^{t} (t - \tau)^{- \frac{1}{2}} \left(1+(t-\tau)^{-\frac{\alpha-1}{4}}\right) (1 + \tau)^{- \frac{1}{2}\left(1-\frac{1}{p}\right)-1}d\tau  \nonumber\\
    &\le Ct^{- \frac{1}{2}\left(1-\frac{1}{p}\right)-\frac{1}{2}} \left(1+t^{-\frac{\alpha-1}{4}}\right) \log(2 + t) 
            + Ct^{- \frac{1}{2}\left(1-\frac{1}{p}\right)-1}\left(t^{\frac{1}{2}}+t^{\frac{1}{2}-\frac{\alpha-1}{4}}\right) \nonumber \\
            &\le Ct^{- \frac{1}{2}\left(1-\frac{1}{p}\right)-\frac{1}{2}}\log(2+t), \ \ t\ge1, \ 1<\alpha<3,    \label{Duhamel-Lp}
    \end{align}
for any $1\le p \le \infty$. Thus, combining \eqref{u-ap-pre} and \eqref{Duhamel-Lp}, we arrive at \eqref{u-ap-est}. 

Finally, we would like to prove \eqref{u^3-ap-est}. 
By using \eqref{u-ap-est}, \eqref{u-decay-Lp} and Lemma~\ref{lem.G-est}, we obtain
 \begin{align*} 
            &\left\| u^{3}(t)-(MG)^{3}(t)\right\|_{L^{p}}  \\
            & = \left\| \left\{\left(u-MG\right)\left(u^{2}+u(MG)+(MG)^{2}\right)\right\}(t) \right\|_{L^{p}} \\
            &\le C \left\| u(t)-MG(t)\right\|_{L^{p}} 
            \left( \left\| u(t) \right\|_{L^{\infty}}^{2} + \left\| u(t) \right\|_{L^{\infty}}\left\| G(t) \right\|_{L^{\infty}} +\left\| G(t) \right\|_{L^{\infty}}^{2}\right) \\
            &\le Ct^{-\frac{1}{2}\left(1-\frac{1}{p}\right)-1} \left\{t^{-\frac{\min\{\alpha-1, 1\}}{2} }+t^{-\frac{1}{2}}\log(2 + t)\right\}, \ \ t\ge1, 
  \end{align*}
for any $1\le p \le \infty$. Thus, we can say that the desired result \eqref{u^3-ap-est} is true.   
\end{proof}

\section{Proof of the Main Results}

In this section, we shall prove our main results Theorem \ref{thm.Duhamel} and Corollaries \ref{thm.Main} and \ref{cor.main}. 
In order to do that, for simplicity, we define the following functions: 
\begin{align} 
    &v(x, t) := \int_{1}^{t} \p_x G(t - \tau) * G^{3}(\tau) d\tau, \quad 
    V(x, t) := \frac{1}{4\sqrt{3}\pi} (\log t)\p_{x} G(x, t), \label{def.v-V}\\
    &W(x, t) :=\int_{0}^{1}\p_{x}G(t-\tau) * u^{3}(\tau) d\tau + \int_{1}^{t} \p_{x} G(t - \tau) * \left(u^{3} - (MG)^{3} \right)(\tau) d\tau.  \label{def.W}
\end{align}
Then, from \eqref{def.I}, \eqref{def.m-mathM}, \eqref{def.v-V} and \eqref{def.W}, we can see that the following relation holds: 
\begin{align} 
        &I_{\alpha, \beta}[u](x, t) +\frac{\beta M^3}{12\sqrt{3}\pi} (\log t)\p_{x} G(x, t) +\frac{\beta \mathcal{M}}{3}\p_{x}G(x, t) + \frac{\beta M^{3}}{3}\Psi(x, t) \nonumber\\ 
        &= - \frac{\beta}{3} \left\{ \int_{0}^{t} \p_{x} (S_{\alpha} - G)(t - \tau) * u^{3}(\tau) d\tau \right\} - \frac{\beta}{3} \left\{ W(x, t) - \mathcal{M} \p_{x} G(x, t) \right\} \nonumber \\
        &\quad \,-\frac{\beta M^{3}}{3}\left\{ v(x, t) - V(x, t) - \Psi(x, t) \right\}. \label{formulation}
\end{align}
In what follows, to complete the proof of Theorem \ref{thm.Duhamel}, we shall evaluate the all terms in the right hand side of \eqref{formulation}. 
First, let us prove the following decay estimate for the first term of the above:  
\begin{prop} \label{prop.S-G-error}
   Let $1<\alpha<3$ and $\beta \in \R$. Assume that $u_{0} \in H^{1}(\R) \cap L^{1}(\R)$, $xu_{0} \in L^{1}(\R)$ and $\| u_0 \|_{H^{1}} + \| u_0 \|_{L^{1}}$ is sufficiently small.
        Then, the solution $u(x, t)$ to \eqref{KdVB} satisfies
    \begin{equation}\label{S-G-error}
        \left\| \int_{0}^{t} \p_{x} (S_{\alpha} - G)(t - \tau) * u^{3}(\tau) d\tau \right\|_{L^{p}} \le Ct^{-\frac{1}{2}\left(1 - \frac{1}{p}\right) -\frac{\alpha}{2}} \log(2 + t), \ \ t\ge2, 
    \end{equation}
      for any $1\le p \le \infty$, where $S_{\alpha}(x, t)$ and $G(x, t)$ are defined by \eqref{def.S} and \eqref{def.G-M}, respectively. 
\end{prop}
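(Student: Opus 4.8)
The plan is to split the time integral at $\tau = t/2$ and estimate each piece using Young's inequality, the approximation estimate for $S_{\alpha}-G$ from Lemma~\ref{lem.S-ap} (with $l=1$), and the decay estimates for $u^{3}$ collected in the proof of Proposition~\ref{prop.u-ap} (namely $\|u^{3}(\cdot,\tau)\|_{L^{1}}\le C(1+\tau)^{-1}$ and $\|u^{3}(\cdot,\tau)\|_{L^{2}}\le C(1+\tau)^{-5/4}$). Because $S_{\alpha}-G$ gains an extra factor $t^{-(\alpha-1)/2}$ over $S_{\alpha}$ itself, the structure of the argument will mirror the estimates \eqref{Duhamel-Linf}--\eqref{Duhamel-L1} in the proof of Proposition~\ref{prop.u-ap}, but with every $(t-\tau)$-power shifted by $(\alpha-1)/2$. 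Throughout I would prove the $L^{\infty}$ and $L^{1}$ bounds separately and then interpolate via \eqref{inter}.

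First, for the $L^{\infty}$-norm: on $0\le\tau\le t/2$ I would use $\|\p_{x}(S_{\alpha}-G)(\cdot,t-\tau)\|_{L^{\infty}}\le C(t-\tau)^{-1-(\alpha-1)/2}\le Ct^{-1-(\alpha-1)/2}$ (since $t-\tau\ge t/2$) together with $\|u^{3}(\cdot,\tau)\|_{L^{1}}\le C(1+\tau)^{-1}$, and the $\tau$-integral contributes a $\log(2+t)$ factor, giving $Ct^{-1-(\alpha-1)/2}\log(2+t) = Ct^{-\alpha/2-1/2}\log(2+t)$. On $t/2\le\tau\le t$ I would instead put $S_{\alpha}-G$ in $L^{2}$, $\|\p_{x}(S_{\alpha}-G)(\cdot,t-\tau)\|_{L^{2}}\le C(t-\tau)^{-3/4-(\alpha-1)/2}$, pair it with $\|u^{3}(\cdot,\tau)\|_{L^{2}}\le C(1+\tau)^{-5/4}\le Ct^{-5/4}$, and integrate $(t-\tau)^{-3/4-(\alpha-1)/2}$ over $[t/2,t]$; this is integrable near $\tau=t$ precisely because $3/4+(\alpha-1)/2<1$ when $\alpha<3/2$, but for $\alpha$ close to $3$ this exponent exceeds $1$ and the naive estimate fails — see the obstacle below. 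Assuming it is handled, this piece is $O(t^{-\alpha/2-1/2})$, which is dominated by the first piece. The $L^{1}$-norm is handled identically, using \eqref{S-est-1}-type behaviour for $S_{\alpha}-G$; one expects $\|\p_{x}(S_{\alpha}-G)(\cdot,t-\tau)\|_{L^{1}}\le C(t-\tau)^{-1/2-(\alpha-1)/2}\cdot(1+\cdots)$ by the same Fourier-analytic argument as in Lemma~\ref{lem.G-est}, and the $\tau$-integration again yields the $\log(2+t)$ factor, producing $Ct^{-\alpha/2}\log(2+t)$; interpolation with the $L^{\infty}$ bound then gives \eqref{S-G-error}.

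The main obstacle is the near-endpoint behaviour of the $t/2\le\tau\le t$ integral when $\alpha$ is close to $3$: the kernel power $(t-\tau)^{-3/4-(\alpha-1)/2}$ is no longer locally integrable once $\alpha>3/2$, so one cannot simply bound $u^{3}$ in a fixed $L^{r}$ and pull the kernel norm out. The fix is to re-balance the Hölder exponents as $\tau\to t$: on the sub-interval $\tau\in[t/2,t]$ one should use $\|\p_{x}(S_{\alpha}-G)(\cdot,t-\tau)*u^{3}(\tau)\|_{L^{\infty}}\le \|\p_{x}(S_{\alpha}-G)(\cdot,t-\tau)\|_{L^{p'}}\|u^{3}(\tau)\|_{L^{p}}$ with $p$ chosen (depending on $\alpha$) so that the resulting power of $(t-\tau)$ stays $>-1$, which is possible because $\alpha<3$ leaves enough room; the decay of $\|u^{3}(\tau)\|_{L^{p}}$ is controlled by \eqref{u^3-est-L1}, \eqref{u^3-est-L2} and interpolation, and one checks the $\tau$-power of the bound never gets worse than $t^{-\alpha/2-1/2}\log(2+t)$. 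This is exactly the place where the hypothesis $\alpha<3$ is used, as flagged in the remark following Theorem~\ref{thm.Duhamel}. Once this endpoint issue is dispatched, the remaining computations are routine beta-function-type integral estimates of the form $\int_{0}^{t/2}(t-\tau)^{-a}(1+\tau)^{-1}d\tau\le Ct^{-a}\log(2+t)$ and $\int_{t/2}^{t}(t-\tau)^{-b}\tau^{-c}\,d\tau\le Ct^{-c}\cdot t^{1-b}$, and assembling them with \eqref{inter} yields the claim for $t\ge 2$.
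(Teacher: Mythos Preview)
Your overall strategy---split at $\tau=t/2$, use Lemma~\ref{lem.S-ap} on the kernel, and interpolate between $L^{\infty}$ and $L^{1}$---is correct, and the $[0,t/2]$ piece goes exactly as you describe. The genuine gap is in your proposed fix for the near-endpoint integral on $[t/2,t]$. You suggest choosing a H\"older pair so that the power of $(t-\tau)$ coming from $\|\p_{x}(S_{\alpha}-G)(\cdot,t-\tau)\|_{L^{p'}}$ stays above $-1$; but by Lemma~\ref{lem.S-ap} that norm is of order $(t-\tau)^{-\frac{1}{2}(1-\frac{1}{p'})-\frac{\alpha-1}{2}-\frac{1}{2}}$, and the integrability condition $\frac{1}{2}(1-\frac{1}{p'})+\frac{\alpha}{2}<1$ rewrites as $\frac{1}{p}<2-\alpha$. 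For $\alpha\ge 2$ no admissible $p\ge 1$ exists, so the scheme you outline cannot cover the range $2\le\alpha<3$. The claim that ``$\alpha<3$ leaves enough room'' is simply false here: keeping the derivative on the kernel forces an unavoidable $(t-\tau)^{-\alpha/2}$ loss even at the most favourable endpoint $p'=1$.

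The remedy the paper uses is to move the derivative off the kernel and onto $u^{3}$ in the $[t/2,t]$ piece: one writes
\[
\int_{t/2}^{t}\p_{x}(S_{\alpha}-G)(t-\tau)*u^{3}(\tau)\,d\tau
=\int_{t/2}^{t}(S_{\alpha}-G)(t-\tau)*\p_{x}\bigl(u^{3}(\tau)\bigr)\,d\tau,
\]
and then bounds by $\|(S_{\alpha}-G)(\cdot,t-\tau)\|_{L^{1}}\,\|\p_{x}(u^{3}(\cdot,\tau))\|_{L^{r}}$ with $r=\infty$ or $r=1$. The kernel factor is now $C(t-\tau)^{-(\alpha-1)/2}$, which is locally integrable precisely when $\alpha<3$, and this is where that hypothesis is actually used. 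The price is that you must first establish decay for $\p_{x}(u^{3})$: from \eqref{u-decay-Lp} and \eqref{u-decay-dx} one gets $\|\p_{x}(u^{3})\|_{L^{1}}\le Ct^{-3/2}$ and $\|\p_{x}(u^{3})\|_{L^{2}}\le Ct^{-7/4}$ immediately, while the $L^{\infty}$ bound $\|\p_{x}(u^{3})\|_{L^{\infty}}\le Ct^{-2}$ requires a short bootstrap on $\|u_{x}\|_{L^{\infty}}$ via the integral equation \eqref{KdVB-IE}. Once these are in hand, the $[t/2,t]$ integrals yield $Ct^{-\frac{1}{2}(1-\frac{1}{p})-\frac{\alpha}{2}}$ and the proof closes by interpolation as you intended.
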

\begin{proof}
Before proving \eqref{S-G-error}, we need to prepare some auxiliary decay estimates for $\p_{x}(u^{3}(x, t))$. 
First, it follows from \eqref{u-decay-Lp}, \eqref{u-decay-dx} and the Cauchy--Schwarz inequality that 
\begin{align}
\left\| \p_{x}\left(u^{3}(t)\right) \right\|_{L^{2}}
&\le 3\left\| u(t) \right\|_{L^{\infty}}^{2}\left\| u_{x}(t) \right\|_{L^{2}}
\le Ct^{-\frac{7}{4}}, \ \ t>0, \label{u^3-dx-L2}\\
\left\| \p_{x}\left(u^{3}(t)\right) \right\|_{L^{1}}
&\le 3\left\| u(t) \right\|_{L^{\infty}}\left\| u(t) \right\|_{L^{2}}\left\| u_{x}(t) \right\|_{L^{2}}
\le Ct^{-\frac{3}{2}}, \ \ t>0. \label{u^3-dx-L1}
\end{align}
Next, let us derive the $L^{\infty}$-norm of $\p_{x}(u^{3}(x, t))$. In order to do that, we shall evaluate the $L^{\infty}$-norm of $u_{x}(x, t)$. 
From \eqref{KdVB-IE}, \eqref{def.I}, the Young inequality, \eqref{S-est}, \eqref{u^3-est-Lp} and \eqref{u^3-dx-L2}, we obtain 
\begin{align}
\left\| u_{x}(t)\right\|_{L^{\infty}}
& \le \left\|\p_{x}S_{\alpha}(t)\right\|_{L^{\infty}}\left\|u_{0}\right\|_{L^{1}} 
 + C\int_{0}^{\frac{t}{2}} \left\| \p_{x}^{2}S_{\alpha}(t - \tau) \right\|_{L^{\infty}} \left\| u^{3}(\tau) \right\|_{L^{1}} d\tau  \nonumber \\
&\quad + C\int_{\frac{t}{2}}^{t} \left\| \p_{x}S_{\alpha}(t - \tau) \right\|_{L^{2}} \left\| \p_{x}\left(u^{3}(\tau)\right) \right\|_{L^{2}} d\tau \nonumber  \\
&\le Ct^{-1} + C\int_{0}^\frac{t}{2} (t - \tau)^{- \frac{3}{2}} (1 + \tau)^{-1} d\tau 
        + C\int_{\frac{t}{2}}^{t} (t-\tau)^{-\frac{3}{4}} \tau^{-\frac{7}{4}} d\tau \nonumber \\
        &\le Ct^{-1} + Ct^{-\frac{3}{2}}\log(2+t) + Ct^{-\frac{3}{2}} 
        \le Ct^{-1}, \ \ t\ge1. \label{u-dx-Linf}
\end{align}
Therefore, from \eqref{u-decay-Lp} and \eqref{u-dx-Linf}, we can easily derive that 
\begin{equation}\label{u^3-dx-Linf}
\left\| \p_{x}\left(u^{3}(t)\right) \right\|_{L^{\infty}}
\le 3\left\| u(t)\right\|_{L^{\infty}}^{2}\left\| u_{x}(t)\right\|_{L^{\infty}}
\le Ct^{-2}, \ \ t\ge1.
\end{equation}
Moreover, by virtue of \eqref{inter}, \eqref{u^3-dx-L1} and \eqref{u^3-dx-Linf}, for any $1\le p\le \infty$, we get 
\begin{equation}\label{u^3-dx-Lp}
\left\| \p_{x}\left(u^{3}(t)\right) \right\|_{L^{p}}\le Ct^{-\frac{1}{2}\left(1-\frac{1}{p}\right)-\frac{3}{2}}, \ \ t\ge1.
\end{equation}

Now, we would like to prove \eqref{S-G-error}. By using the Young inequality, \eqref{S-ap-1}, \eqref{u^3-est-Lp} and \eqref{u^3-dx-Lp}, we obtain 
    \begin{align}
        &\left\| \int_{0}^{t} \p_{x} (S_{\alpha} - G)(t - \tau) * u^{3}(\tau) d\tau \right\|_{L^{p}} \nonumber \\
        &\le \int_{0}^{\frac{t}{2}} \left\| \p_{x}(S_{\alpha} - G)(t - \tau) \right\|_{L^{p}} \left\| u^{3}(\tau) \right\|_{L^{1}} d\tau \nonumber \\
        &\quad + \int_{\frac{t}{2}}^{t} \left\| \left(S_{\alpha}-G\right)(t - \tau) \right\|_{L^{1}} \left\| \p_{x}\left(u^{3}(\tau)\right) \right\|_{L^{p}} d\tau \nonumber\\
        & \le C\int_{0}^\frac{t}{2} (t - \tau)^{- \frac{1}{2}\left(1-\frac{1}{p}\right)-\frac{\alpha-1}{2}-\frac{1}{2} } (1 + \tau)^{-1} d\tau 
        + C\int_{\frac{t}{2}}^{t} (t-\tau)^{-\frac{\alpha-1}{2}} \tau^{-\frac{1}{2}\left(1-\frac{1}{p}\right)-\frac{3}{2}} d\tau \nonumber\\
        &\le C t^{- \frac{1}{2}\left(1-\frac{1}{p}\right)-\frac{\alpha}{2}} \log(2 + t)
        + C t^{-\frac{1}{2}\left(1-\frac{1}{p}\right)-\frac{\alpha}{2}} \nonumber  \\
        &\le C t^{- \frac{1}{2}\left(1-\frac{1}{p}\right)-\frac{\alpha}{2}} \log(2 + t), \ \ t \ge2, \ \ 1<\alpha<3, \label{S-G-Linf}
    \end{align}
for any $1\le p \le \infty$. Therefore, we can conclude that \eqref{S-G-error} is true. 
\end{proof}

Next, we shall prove that the function $W(x, t)$ defined by \eqref{def.W} is well approximated by $\mathcal{M}\p_{x}G(x, t)$. 
Indeed, the following asymptotic relation holds true: 
\begin{prop}\label{prop.W-ap}
    Let $1<\alpha<3$ and $\beta \in \R$. Assume that $u_{0} \in H^{1}(\R) \cap L^{1}(\R)$, $xu_{0} \in L^{1}(\R)$ and $\| u_{0} \|_{H^{1}} + \| u_{0} \|_{L^{1}}$ is sufficiently small.
    Then, the solution $u(x, t)$ to \eqref{KdVB} satisfies
    \begin{equation}\label{W-ap}
        \lim_{t \to \infty} t^{\frac{1}{2}\left(1 - \frac{1}{p} \right) + \frac{1}{2}} \left\| W(t) - \mathcal{M}\p_{x}G(t) \right\|_{L^{p}} = 0,
    \end{equation}
    for any $1\le p \le \infty$, where $W(x, t)$, $G(x, t)$ and $\mathcal{M}$ are defined by \eqref{def.W}, \eqref{def.G-M} and \eqref{def.m-mathM}, respectively. 
\end{prop}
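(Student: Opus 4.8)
The plan is to treat $W$ as a Duhamel integral $W(x,t)=\int_{0}^{t}\p_{x}G(t-\tau)\ast f(\tau)\,d\tau$ (valid for $t\ge1$) with source
\[
f(\cdot,\tau):=\begin{cases}u^{3}(\cdot,\tau),&0\le\tau<1,\\[2pt]\big(u^{3}-(MG)^{3}\big)(\cdot,\tau),&\tau\ge1,\end{cases}\qquad m(\tau):=\int_{\R}f(y,\tau)\,dy,
\]
so that $\mathcal{M}=\int_{0}^{\infty}m(\tau)\,d\tau$. First I would record the two properties of the source that drive the argument: $\|f(\cdot,\tau)\|_{L^{1}}\le C$ on $[0,1]$ by \eqref{u^3-est-L1}, and, by Proposition~\ref{prop.u-ap}, $\|f(\cdot,\tau)\|_{L^{p}}\le C\tau^{-\frac12(1-\frac1p)-1}\eta(\tau)$ for $\tau\ge1$ and all $p$, where $\eta(\tau):=\tau^{-\frac{\min\{\alpha-1,1\}}{2}}+\tau^{-\frac12}\log(2+\tau)\to0$ as $\tau\to\infty$ (this uses $\alpha>1$). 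Consequently $\tau\mapsto\|f(\cdot,\tau)\|_{L^{1}}$ is integrable on $(0,\infty)$ — so $\mathcal{M}$ is a finite constant — and both $\int_{t}^{\infty}\|f(\cdot,\tau)\|_{L^{1}}\,d\tau$ and $\sup_{\tau\ge t/2}\eta(\tau)$ vanish as $t\to\infty$. One more input I would use: since $u\in C([0,\infty);L^{2}(\R))$ (which follows from the mild formulation \eqref{KdVB-IE} and Proposition~\ref{prop.GE-decay}), the family $\{f(\cdot,\tau):0\le\tau\le T\}$ is relatively compact in $L^{1}(\R)$ for each fixed $T$, whence $\sup_{0\le\tau\le T}\int_{|y|>R}|f(y,\tau)|\,dy\to0$ as $R\to\infty$.

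The main step is an $\e$-argument. Given $\e>0$, I would fix $T$ with $\int_{T}^{\infty}\|f(\cdot,\tau)\|_{L^{1}}\,d\tau<\e$, then $R$ with $\int_{0}^{T}\!\int_{|y|>R}|f(y,\tau)|\,dy\,d\tau<\e$, and restrict to $t\ge2T$. Splitting $\mathcal{M}=\int_{0}^{t}m+\int_{t}^{\infty}m$, one writes
\[
W(x,t)-\mathcal{M}\p_{x}G(x,t)=\int_{0}^{t}\!\!\int_{\R}\big[\p_{x}G(x-y,t-\tau)-\p_{x}G(x,t)\big]f(y,\tau)\,dy\,d\tau-\Big(\int_{t}^{\infty}m(\tau)\,d\tau\Big)\p_{x}G(x,t),
\]
the last term being $O(\e)\,t^{-\frac12(1-\frac1p)-\frac12}$ by \eqref{G-est}. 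For $0\le\tau\le T$ I would exploit the difference structure: the mean value theorem in $x$, the heat identity $\p_{t}G=\p_{x}^{2}G$, and \eqref{G-est} give $\|\p_{x}G(\cdot-y,t-\tau)-\p_{x}G(\cdot,t)\|_{L^{p}}\le C|y|(t-\tau)^{-\frac12(1-\frac1p)-1}+C\tau(t-\tau)^{-\frac12(1-\frac1p)-\frac32}$; on $|y|\le R$ this makes the double integral $O_{R,T}(t^{-\frac12(1-\frac1p)-1})$, while on $|y|>R$ one bounds the kernel crudely by $\|\p_{x}G(t-\tau)\|_{L^{p}}+\|\p_{x}G(t)\|_{L^{p}}\le Ct^{-\frac12(1-\frac1p)-\frac12}$ and uses the smallness of $\int_{0}^{T}\!\int_{|y|>R}|f|$, getting $O(\e)\,t^{-\frac12(1-\frac1p)-\frac12}$. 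For $\tau\in[T,t]$ the difference structure is dropped: the $m(\tau)\p_{x}G(x,t)$ part is $O(\e)\,t^{-\frac12(1-\frac1p)-\frac12}$ because $\int_{T}^{\infty}|m|<\e$, and $\int_{T}^{t}\|\p_{x}G(t-\tau)\ast f(\tau)\|_{L^{p}}\,d\tau$ I would split at $t/2$, using $\|\p_{x}G(t-\tau)\|_{L^{p}}\|f(\tau)\|_{L^{1}}$ with the small $L^{1}$-tail of $f$ on $[T,t/2]$, and $\|\p_{x}G(t-\tau)\|_{L^{1}}\|f(\tau)\|_{L^{p}}\le C(t-\tau)^{-1/2}t^{-\frac12(1-\frac1p)-1}\sup_{\sigma\ge t/2}\eta(\sigma)$ on $[t/2,t]$, whose $\tau$-integral is $o(t^{-\frac12(1-\frac1p)-\frac12})$. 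Collecting everything, multiplying by $t^{\frac12(1-\frac1p)+\frac12}$, and letting $t\to\infty$ then $\e\downarrow0$ yields \eqref{W-ap}.

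I expect the region $\tau\approx t$ to be the main obstacle: there $\p_{x}G(t-\tau)$ fails to lie in $L^{p}$ for $\tau$ too close to $t$ — critically so when $p=\infty$ — so the naive bound $\|\p_{x}G(t-\tau)\|_{L^{p}}\|f(\tau)\|_{L^{1}}$ is not $\tau$-integrable; the remedy is to keep the singular kernel in $L^{1}$ and absorb it against the supercritical time-decay $\|f(\cdot,\tau)\|_{L^{p}}=o(\tau^{-\frac12(1-\frac1p)-1})$ supplied by Proposition~\ref{prop.u-ap}. A secondary but necessary technical point is the $L^{2}$-continuity of $\tau\mapsto u(\cdot,\tau)$, which underlies the spatial tightness of the source on $[0,T]$ and is what turns the bound $\int_{0}^{\infty}\|f(\cdot,\tau)\|_{L^{1}}\,d\tau<\infty$ into genuine approximate-identity behaviour.
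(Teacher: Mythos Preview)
Your argument is correct and is the same approximate-identity strategy the paper uses: write $W-\mathcal{M}\,\p_{x}G$ as a kernel-difference integral plus a tail in $\tau$, control the near-field by the mean value theorem on $\p_{x}G$ together with $\p_{t}G=\p_{x}^{2}G$, and control the far-field in $\tau$ by putting $\p_{x}G(t-\tau)$ in $L^{1}$ and using the supercritical decay $\|f(\cdot,\tau)\|_{L^{p}}=o(\tau^{-\frac12(1-\frac1p)-1})$ from Proposition~\ref{prop.u-ap}. The only difference is in bookkeeping: the paper splits the $[0,1]$- and $[1,\infty)$-pieces of $W$ separately and uses $t$-dependent cutoffs $\tau\lessgtr\frac{\e t}{2}$, $|y|\lessgtr\e\sqrt{t}$, obtaining the spatial-tail smallness purely from dominated convergence via $\int_{1}^{\infty}\!\int_{\R}|\rho|\,dy\,d\tau<\infty$; you fix $T$ and $R$ once and get the spatial tail from $L^{1}$-tightness of $\{f(\cdot,\tau)\}_{\tau\le T}$, which costs you the extra input $u\in C([0,\infty);L^{2})$. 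Your version avoids the $\e$-dependent constant $C(\e)$ that appears in the paper's large-$\tau$ estimate; the paper's version is slightly more self-contained in that it needs no continuity of $u$ beyond the decay bounds already recorded.
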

\begin{proof}
    Throughout this proof, for simplicity, we define the function $\rho(x, t)$ by 
    \begin{equation}\label{notations}
    \rho(x, t) := u^{3}(x, t) - (MG)^{3}(x, t)
    \end{equation}
    and the constants $\mathcal{M}_{0}$ and $\mathcal{M}_{1}$ by 
        \begin{equation}\label{notations-MM}
    \mathcal{M}_{0}:=\int_{0}^{1}\int_{\R}u^{3}(y, \tau)dyd\tau, \ \ \mathcal{M}_{1}:=\int_{1}^{\infty}\int_{\R} \rho(y, \tau) dyd\tau.
    \end{equation}
    Then, it follows from the definitions of $W(x, t)$ by \eqref{def.W} and $\mathcal{M}$ by \eqref{def.m-mathM} that 
    \begin{align}
        W(x, t) - \mathcal{M}\p_{x}G(x, t) 
            &= \int_{1}^{t}\p_{x}G(t-\tau)*\rho(\tau)d\tau - \mathcal{M}_{1}\p_{x}G(x, t)   \nonumber \\
            &\quad + \int_{0}^{1}\p_{x}G(t-\tau)*u^{3}(\tau)d\tau - \mathcal{M}_{0}\p_{x}G(x, t). \label{W-ap-pre}
    \end{align}
    
    In order to complete the proof of \eqref{W-ap}, we shall prove that the following asymptotic formulas are true: 
    \begin{align}
    & \lim_{t \to \infty} t^{\frac{1}{2}\left(1 - \frac{1}{p} \right) + \frac{1}{2} } \left\| \int_{1}^{t}\p_{x}G(t-\tau)*\rho(\tau)d\tau - \mathcal{M}_{1}\p_{x}G(t) \right\|_{L^{p}} = 0, \label{W-ap-1}\\
     & \lim_{t \to \infty} t^{\frac{1}{2}\left(1 - \frac{1}{p} \right) + \frac{1}{2} } \left\| \int_{0}^{1}\p_{x}G(t-\tau)*u^{3}(\tau)d\tau - \mathcal{M}_{0}\p_{x}G(t) \right\|_{L^{p}} = 0,  \label{W-ap-0}
    \end{align}
    for any $1\le p \le \infty$. In what follows, let us show only \eqref{W-ap-1} because we can prove \eqref{W-ap-0} in the same way. First, from the definition of $\mathcal{M}_{1}$ by \eqref{notations-MM}, we have  
    \begin{align} 
        &\int_{1}^{t}\p_{x}G(t-\tau)*\rho(\tau)d\tau - \mathcal{M}_{1}\p_{x}G(x, t) \nonumber \\
            &= \int_{1}^{t} \int_{\R} \p_{x} G(x - y, t - \tau) \rho(y, \tau) dy d\tau - \p_{x} G(x, t) \int_{1}^{\infty} \int_{\R} \rho(y, \tau) dy d\tau \nonumber \\
            &= \int_{1}^{t} \int_{\R} \p_{x}  \left( G(x - y, t - \tau) - G(x, t) \right) \rho(y, \tau) dy d\tau - \p_{x}G(x, t)\int_{t}^{\infty} \int_{\R} \rho(y, \tau) dy d\tau \nonumber \\
            &=: X(x, t) + Y(x, t). \label{split.X-Y}
    \end{align}
    In addition, we take small $\varepsilon > 0$ and then split the integral of $X(x, t)$ as follows:
    \begin{align*}
        X(x, t) &= \int_{\frac{\varepsilon t}{2}}^{t} \int_{\R} \p_{x} \left( G(x - y, t - \tau) - G(x, t) \right) \rho(y, \tau) dy d\tau \\
        &\quad + \int_{1}^\frac{\varepsilon t}{2} \left(\int_{|y| \ge \varepsilon \sqrt{t}} +\int_{|y| \le \varepsilon \sqrt{t}}\right)\p_{x} \left( G(x - y, t - \tau) - G(x, t)\right) \rho(y, \tau) dy d\tau \\
        &=: X_{1}(x, t) + X_{2}(x, t) + X_{3}(x, t).
    \end{align*}
        
        We shall evaluate $X_{1}(x, t)$, $X_{2}(x, t)$ and $X_{3}(x, t)$. In the following, let $1\le p \le \infty$. 
        From Proposition~\ref{prop.u-ap} and \eqref{notations}, we note that the following estimate holds:
         \begin{equation}\label{rho-est}
            \left\| \rho(\tau) \right\|_{L^{p}}
                       \le C\tau^{-\frac{1}{2}\left(1-\frac{1}{p}\right)-1} \left\{\tau^{-\frac{\min\{\alpha-1, 1\}}{2} }+\tau^{-\frac{1}{2}}\log(2 + \tau)\right\}, \ \ \tau \ge1.
    \end{equation}
    Now, let us evaluate $X_{1}(x, t)$. It follows from the Young inequality and \eqref{rho-est} that 
    \begin{align}
            \left\| X_{1}(t) \right\|_{L^{p}} 
            &\le \int_{\frac{\varepsilon t}{2}}^{t} \left\| \p_{x} G(t - \tau) \right\|_{L^{1}} \left\| \rho(\tau) \right\|_{L^{p}} d\tau + \left\| \p_{x} G(t) \right\|_{L^{p}} \int_{\frac{\varepsilon t}{2}}^{t} \left\| \rho(\tau) \right\|_{L^{1}} d\tau \nonumber \\
            &\le C\int_{\frac{\varepsilon t}{2}}^{t}  (t - \tau)^{-\frac{1}{2}} \tau^{-\frac{1}{2}\left(1-\frac{1}{p}\right)-1} \left\{\tau^{-\frac{\min\{\alpha-1, 1\}}{2} }+\tau^{-\frac{1}{2}}\log(2 + \tau)\right\} d\tau \nonumber \\
            &\quad + C t^{-\frac{1}{2}\left( 1 - \frac{1}{p} \right) - \frac{1}{2}} \int_{\frac{\varepsilon t}{2}}^{t} \tau^{-1} \left\{\tau^{-\frac{\min\{\alpha-1, 1\}}{2} }+\tau^{-\frac{1}{2}}\log(2 + \tau)\right\}d\tau \nonumber \\
            &\le C(\e)\, t^{-\frac{1}{2}\left(1-\frac{1}{p}\right) - \frac{1}{2}} \left\{t^{-\frac{\min\{\alpha-1, 1\}}{2} }+t^{-\frac{1}{2}}\log(2 + \tau)\right\}, \ \ t\ge \frac{2}{\varepsilon}.  \label{X1-est}
    \end{align}
    Next, we deal with $X_{2}(x, t)$. From Lemma~\ref{lem.G-est}, we can easily see that
    \begin{align} 
            \left\| X_{2}(t) \right\|_{L^{p}} 
            &\le \int_{1}^{\frac{\varepsilon t}{2}} \left( \left\| \p_{x} G(t - \tau) \right\|_{L^{p}} + \left\| \p_{x} G(t) \right\|_{L^{p}} \right)\int_{|y| \ge \varepsilon \sqrt{t}} | \rho(y, \tau) | dy d\tau \nonumber \\
            &\le C \int_{1}^{\frac{\varepsilon t}{2}} \left( (t-\tau)^{-\frac{1}{2}\left(1-\frac{1}{p}\right)-\frac{1}{2}} + t^{-\frac{1}{2}\left(1-\frac{1}{p}\right)-\frac{1}{2}} \right)\int_{|y| \ge \varepsilon \sqrt{t}} | \rho(y, \tau) | dy d\tau  \nonumber \\
            &\le C t^{-\frac{1}{2}\left(1-\frac{1}{p}\right)-\frac{1}{2}} Z(t), \ \ t\ge \frac{2}{\varepsilon}, \label{X2-est}
    \end{align}
    where we have defined
    \begin{equation*}
        Z(t) := \int_{1}^{\infty} \int_{|y| \geq \varepsilon \sqrt{t}} | \rho(y, \tau) | dy d\tau.
    \end{equation*}
    In addition, we note that $\int_{1}^{\infty} \int_{\R} |\rho(y, \tau)| dy d\tau < \infty$ by virtue of \eqref{rho-est}. 
    Therefore, applying the Lebesgue dominated convergence theorem, we obtain 
    \begin{equation}\label{Z-lim}
        \lim_{t \to \infty} Z(t) = \int_{1}^{\infty} \lim_{t \to \infty} \int_{|y| \geq \varepsilon \sqrt{t}} | \rho(y, \tau) | dy d\tau=0. 
    \end{equation}
    
    Finally, we shall treat $X_{3}(x, t)$.
    If $1 \le \tau \le \frac{\varepsilon t}{2}$ and $|y| \le \varepsilon \sqrt{t}$, by using the mean value theorem, the fact $\p_{t}G(x, t)=\p_{x}^{2}G(x, t)$ and Lemma \ref{lem.G-est}, we obtain
    \begin{align*} 
            &\left\| \p_{x} \left( G(\cdot - y, t - \tau) - G(\cdot, t) \right) \right\|_{L^{p}} \\
            &\le \left\| \p_{x} \left( G(\cdot - y, t - \tau) - G(\cdot, t-\tau) \right) \right\|_{L^{p}} + \left\| \p_{x} \left( G(\cdot, t - \tau) - G(\cdot, t) \right) \right\|_{L^{p}} \\
            &\le C (t - \tau)^{-\frac{1}{2}\left( 1 - \frac{1}{p} \right) - 1 } |y| + C (t - \tau)^{-\frac{1}{2}\left( 1 - \frac{1}{p} \right) - \frac{3}{2} }\tau \\
            &\le C \varepsilon t^{-\frac{1}{2}\left(1-\frac{1}{p}\right)-\frac{1}{2}}, \ \ t\ge \frac{2}{\varepsilon}.  
    \end{align*}
    Thus, combining the fact $\int_{1}^{\infty} \int_{\R} |\rho(y, \tau)| dy d\tau < \infty$ and the above estimate, we have
    \begin{align} 
            \left\| X_{3}(t) \right\|_{L^{p}} 
            &\le \int_{1}^{\frac{\varepsilon t}{2}} \int_{|y| \le \varepsilon \sqrt{t}} \left\| \p_{x} \left( G(\cdot - y, t - \tau) - G(\cdot, t) \right) \right\|_{L^{p}} |\rho(y, \tau)| dy d\tau \nonumber \\
            &\le C \varepsilon t^{-\frac{1}{2}\left(1-\frac{1}{p}\right)-\frac{1}{2}}, \ \ t\ge \frac{2}{\varepsilon}. \label{X3-est}
    \end{align}
    
    For $Y(x, t)$ in \eqref{split.X-Y}, using Lemma~\ref{lem.G-est} and \eqref{rho-est}, we obtain
    \begin{align}
            \left\| Y(t) \right\|_{L^{p}} 
            &\le \left\| \p_{x} G(t) \right\|_{L^p} \int_{t}^{\infty} \left\|\rho(\tau)\right\|_{L^{1}} d\tau \nonumber \\
            &\le C t^{-\frac{1}{2}\left( 1 - \frac{1}{p} \right) - \frac{1}{2}} \int_{t}^{\infty} \left\{\tau^{-\frac{\min\{\alpha-1, 1\}}{2}-1 }+\tau^{-\frac{3}{2}}\log(2 + \tau)\right\} d\tau \nonumber\\
            &=C t^{-\frac{1}{2}\left( 1 - \frac{1}{p} \right) - \frac{1}{2}} \left[-\frac{2}{\min\{\alpha-1, 1\}}\tau^{-\frac{\min\{\alpha-1, 1\}}{2}}\right]_{t}^{\infty} \nonumber\\
            &\quad +C t^{-\frac{1}{2}\left( 1 - \frac{1}{p} \right) - \frac{1}{2}} \left\{ \left[-2\tau^{-\frac{1}{2}}\log(2+\tau)\right]_{t}^{\infty}+2\int_{t}^{\infty}\tau^{-\frac{1}{2}}(2+\tau)^{-1}d\tau \right\} \nonumber\\
            &\le C t^{-\frac{1}{2}\left( 1 - \frac{1}{p} \right) - \frac{1}{2}} \left\{ t^{-\frac{\min\{\alpha-1, 1\}}{2}} + t^{-\frac{1}{2}}\log(2+t) \right\} , \ \ t>1. \label{Y-est}
    \end{align}
    
    Eventually, summing up \eqref{split.X-Y} and \eqref{X1-est} through \eqref{Y-est}, we arrive at
    \begin{align*}
        \limsup_{t \to \infty} t^{\frac{1}{2}\left(1-\frac{1}{p}\right)+\frac{1}{2}} \left\| \int_{1}^{t} \p_{x} G(t - \tau) * \rho (\tau) d\tau - \mathcal{M}_{1} \p_{x} G(t) \right\|_{L^{p}} \le C \varepsilon.
    \end{align*}
    Therefore, we finally obtain \eqref{W-ap-1} because $\varepsilon>0$ can be chosen arbitrarily small. 
    As a conclusion, combining \eqref{W-ap-pre}, \eqref{W-ap-1} and \eqref{W-ap-0}, we can see that \eqref{W-ap} is true. 
\end{proof}

Finally, in order to complete the proof of Theorem \ref{thm.Duhamel}, we shall prove that the leading term of $v(x, t)-V(x, t)$ is given by $\Psi(x, t)$ defined in \eqref{def.Psi}. 
Actually, we can show the following asymptotic formula: 
\begin{prop} \label{prop.v-V-Psi}
    Let $l \in \mathbb{N}\cup\{0\}$.
    Then, for any $1\le p \le \infty$, we have 
    \begin{align} \label{v-V-Psi.est}
        \left\| \p_{x}^{l} \left( v- V- \Psi \right) (t)\right\|_{L^{p}} \le C\left\|yF_{*}\right\|_{L^{1}} t^{-\frac{1}{2}\left(1-\frac{1}{p}\right)-1-\frac{l}{2}}, \ \ t>1,
    \end{align}
    where $v(x, t)$ and $V(x, t)$ are defined by \eqref{def.v-V}, while $\Psi(x, t)$ and $F_{*}(y)$ are defined by \eqref{def.Psi} and \eqref{def.F}, respectively.
\end{prop}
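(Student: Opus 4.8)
The plan is to reduce \eqref{v-V-Psi.est} to an exact algebraic identity for $v-V-\Psi$ plus one remainder estimate exploiting the vanishing mean of $F_{*}$. The starting observation is a computation with Gaussians: from $\int_{\R}e^{-3y^{2}/4}\,dy=2\sqrt{\pi/3}$ and $\int_{\R}e^{-y^{2}/4}\,dy=2\sqrt{\pi}$ one gets $\int_{\R}F_{*}(y)\,dy=0$, while comparing \eqref{def.F} with $G^{3}(y,s)=(4\pi s)^{-3/2}e^{-3y^{2}/(4s)}$ and $\frac{1}{4\sqrt{3}\pi s}G(y,s)=\frac{1}{8\sqrt{3\pi^{3}}}s^{-3/2}e^{-y^{2}/(4s)}$ yields the identity $F(\cdot,s)=G^{3}(\cdot,s)-\frac{1}{4\sqrt{3}\pi s}G(\cdot,s)$ for $s>0$.

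First I would insert this splitting of $G^{3}$ into the definition of $v$ in \eqref{def.v-V}: by the heat semigroup identity $G(t-\tau)\ast G(\tau)=G(t)$ and $\int_{1}^{t}\tau^{-1}\,d\tau=\log t$, the $G$-part contributes exactly $\frac{1}{4\sqrt{3}\pi}(\log t)\p_{x}G(x,t)=V(x,t)$, so $v(x,t)-V(x,t)=\int_{1}^{t}\p_{x}G(t-\tau)\ast F(\tau)\,d\tau$. Next I would show that the same Duhamel integral started from $\tau=0$ is exactly $\Psi$: substituting $\tau=ts$ and using the parabolic scalings $F(y,t\nu)=t^{-3/2}F(y/\sqrt{t},\nu)$ and $\p_{x}G(y,t\mu)=t^{-1}(\p_{x}G)(y/\sqrt{t},\mu)$, together with the change of variables $y=\sqrt{t}\,z$ in the convolution, gives $\int_{0}^{t}\p_{x}G(t-\tau)\ast F(\tau)\,d\tau=t^{-1}\int_{0}^{1}\bigl(\p_{x}G(1-s)\ast F(s)\bigr)(x/\sqrt{t})\,ds$; since $\p_{x}$ and $\int_{0}^{1}ds$ both commute with the convolution, this equals $t^{-1}\Psi_{*}(x/\sqrt{t})=\Psi(x,t)$ by \eqref{def.Psi} (the integral from $\tau=0$ converges precisely because $F_{*}$ has zero mean, as quantified below). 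Subtracting, one obtains the clean formula $v(x,t)-V(x,t)-\Psi(x,t)=-\int_{0}^{1}\p_{x}G(t-\tau)\ast F(\tau)\,d\tau$ for $t>1$.

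It then remains to estimate this last integral. For $\tau\in(0,1)$ the identity $\int_{\R}F(y,\tau)\,dy=0$ allows one to write $\p_{x}^{l+1}G(t-\tau)\ast F(\tau)$ as $\int_{\R}\bigl(\p_{x}^{l+1}G(\cdot-y,t-\tau)-\p_{x}^{l+1}G(\cdot,t-\tau)\bigr)F(y,\tau)\,dy$, and then the mean value theorem with Young's inequality and the scaling $\int_{\R}|y|\,|F(y,\tau)|\,dy=\tau^{-1/2}\|yF_{*}\|_{L^{1}}$ give $\|\p_{x}^{l+1}G(t-\tau)\ast F(\tau)\|_{L^{p}}\le\|\p_{x}^{l+2}G(\cdot,t-\tau)\|_{L^{p}}\,\tau^{-1/2}\,\|yF_{*}\|_{L^{1}}$. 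Bounding $\|\p_{x}^{l+2}G(\cdot,t-\tau)\|_{L^{p}}\le C(t-\tau)^{-\frac12(1-\frac1p)-1-\frac l2}$ by Lemma~\ref{lem.G-est} (with $k=0$), using $t-\tau\ge t/2$ uniformly on $\tau\in(0,1)$ when $t\ge 2$, and $\int_{0}^{1}\tau^{-1/2}\,d\tau=2$, the $\tau$-integration yields \eqref{v-V-Psi.est} for $t\ge 2$; the range $1<t<2$ is handled by splitting the $\tau$-integral at $\tau=\tfrac12$, transferring the derivatives onto $F$ (smooth there) on $(\tfrac12,1)$, and using $t^{-\frac12(1-\frac1p)-1-\frac l2}\asymp 1$.

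The main obstacle I expect is the bookkeeping in the two algebraic identities — in particular checking that the Duhamel integral from $\tau=0$ reproduces precisely $\Psi_{*}$ from \eqref{def.Psi}, which requires care with the parabolic scaling exponents and with the order in which $\p_{x}$, the convolution, and $\int_{0}^{1}ds$ act. Once Steps above are in place, the remainder estimate is routine given Lemma~\ref{lem.G-est} and the vanishing of $\int_{\R}F_{*}$.
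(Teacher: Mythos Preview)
Your proof is correct and follows essentially the same route as the paper's: both compute $v-V$ as the Duhamel integral $\int_{1}^{t}\p_{x}G(t-\tau)\ast F(\tau)\,d\tau$, identify the full integral $\int_{0}^{t}$ with $\Psi$ via parabolic scaling, and then estimate the leftover short-time piece using the vanishing mean of $F_{*}$ together with the mean value theorem and Lemma~\ref{lem.G-est}. The only cosmetic difference is that the paper first passes to the self-similar variable $s=\tau/t$ (so the remainder appears as $\int_{0}^{1/t}$ in $s$) while you stay in the original variable (remainder $-\int_{0}^{1}$ in $\tau$); these are identical under $\tau=ts$, and your explicit treatment of $1<t<2$ addresses a point the paper glosses over.
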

\begin{proof}
    First, from the definition of $V(x, t)$ by \eqref{def.v-V}, we can see that this function is the solution to the following Cauchy problem: 
    \begin{align*}
        \begin{split}
            &V_{t} - V_{xx} = \frac{1}{4\sqrt{3}\pi}t^{-1} \p_{x} G(x, t), \ \ x \in \R, \ t > 1, \\
            &V(x, 1) = 0, \ \ x \in \R.
        \end{split}
    \end{align*}
    Therefore, we can rewrite $V(x, t)$ as follows:
    \begin{align*} 
        V(x, t) = \frac{1}{4\sqrt{3}\pi} \int_{1}^{t} G(t - \tau) * \left( \tau^{-1} \p_{x}G(\tau) \right) d\tau.
    \end{align*}
    Thus, it follows from \eqref{def.v-V} and \eqref{def.F} that 
    \begin{align} 
            &v(x, t) - V(x, t) = \int_{1}^{t} \p_{x} G(t - \tau) * \left( G^{3}(\tau) - \frac{1}{4\sqrt{3}\pi} \tau^{-1} G(\tau) \right) d\tau \nonumber \\
            &= \int_{1}^{t} \int_{\R} \p_{x} G(x - y, t - \tau) \left( \frac{1}{8\sqrt{\pi^3}} \tau^{-\frac{3}{2}} e^{-\frac{3y^2}{4\tau}} - \frac{1}{8\sqrt{3\pi^3}} \tau^{-\frac{3}{2}} e^{-\frac{y^2}{4\tau}} \right) dy d\tau \nonumber \\
            &= \p_{x} \left( \int_{1}^{t} \int_{\R} G(x - y, t - \tau) F(y, \tau) dy d\tau \right)
            =: \p_{x} K(x, t). \label{def.K}
    \end{align}
    
    Next, let us transform $K(x, t)$ in the above \eqref{def.K}, by using the scaling argument. 
    Actually, from \eqref{def.G-M} and \eqref{def.F}, we note that the following self-similar structures of $G(x, t)$ and $F(y, \tau)$ hold true: 
    \begin{equation}\label{scaling}
    G(x, t)=\lambda G(\lambda x, \lambda^{2}t), \ \ F(y, \tau)=\lambda^{3}F(\lambda y, \lambda^{2}\tau), \ \ \text{for} \ \ \lambda>0.
    \end{equation}
By virtue of the above properties, we can easily obtain
\[
G(x-y, t-\tau)=\frac{1}{\sqrt{t}}G\left(\frac{x-y}{\sqrt{t}}, 1-\frac{\tau}{t}\right), \ \ F(y, \tau)=t^{-\frac{3}{2}}F\left(\frac{y}{\sqrt{t}}, \frac{\tau}{t}\right). 
\] 
Therefore, using the above equations and the change of variables, we get 
    \begin{align}
            K(x, t) 
             &=t^{-2}\int_{1}^{t}\int_{\R}G\left(\frac{x-y}{\sqrt{t}}, 1-\frac{\tau}{t} \right)F\left(\frac{y}{\sqrt{t}}, \frac{\tau}{t}\right)dyd\tau \nonumber \\
                          &=t^{-\frac{1}{2}}\int_{\frac{1}{t}}^{1}\int_{\R}G\left(\frac{x}{\sqrt{t}}-z, 1-s \right)F(z, s)dzds \nonumber \\
             &= t^{-\frac{1}{2}} \int_{\frac{1}{t}}^{1} \left( G(1 - s) * F(s) \right)\left( \frac{x}{\sqrt{t}} \right) ds. \label{K-change}
    \end{align}
    Hence, combining \eqref{def.K} and \eqref{K-change}, we have
    \begin{align} \label{v-V-re}
        v(x, t) - V(x, t) = t^{-\frac{1}{2}} \p_{x} \left( \int_{\frac{1}{t}}^{1} \left( G(1 - s) * F(s) \right)\left( \frac{x}{\sqrt{t}} \right) ds \right).
    \end{align}
    On the other hand, from the definition of $\Psi(x, t)$ by \eqref{def.Psi}, we can rewrite it as 
    \begin{align} \label{Psi-re}
        \Psi(x, t) = t^{-\frac{1}{2}} \p_{x} \left( \int_{0}^{1} \left( G(1 - s) * F(s) \right)\left( \frac{x}{\sqrt{t}} \right) ds \right).
    \end{align}
     Thus, for any $1\le p \le \infty$, it follows from \eqref{v-V-re} and \eqref{Psi-re} that
    \begin{align} 
            \left\| \p_{x}^{l} \left( v- V - \Psi \right) (t)\right\|_{L^{p}}  
            &= t^{-\frac{1}{2}} \left\| \p_{x}^{l+1} \left( \int_0^{\frac{1}{t}} \left( G(1 - s) * F(s) \right)\left( \frac{\cdot}{\sqrt{t}} \right) ds \right) \right\|_{L^{p}} \nonumber \\
            &= t^{-1 + \frac{1}{2p} - \frac{l}{2}} \left\| \p_{x}^{l+1} \left( \int_0^{\frac{1}{t}} G(1 - s) * F(s) ds \right) \right\|_{L^{p}}. \label{v-V-Psi-pre1}
    \end{align}
    
    In what follows, we shall evaluate the right hand side of \eqref{v-V-Psi-pre1}.
    Now, we note that $\int_\R F_{*}(y) dy = 0$.
    Therefore, using the mean value theorem, \eqref{scaling}, the Young inequality and Lemma~\ref{lem.G-est}, we obtain
    \begin{align} 
        &\left\| \p_{x}^{l+1} \left( \int_0^{\frac{1}{t}} G(1 - s) * F(s) ds \right) \right\|_{L^{p}} \nonumber \\
        &=\left\|  \p_{x}^{l+1}\int_{0}^{\frac{1}{t}} \int_{\R} G(\cdot - y, 1 - s) s^{-\frac{3}{2}}F_{*} \left(\frac{y}{\sqrt{s}}\right) dy ds \right\|_{L^{p}} \nonumber \\
        &=\left\| \int_{0}^{\frac{1}{t}} \int_{\R} \left( \int_{0}^{1} \p_{x}^{l+2} G(\cdot - \eta y, 1 - s) d\eta \right) ys^{-\frac{3}{2}}F_{*}\left( \frac{y}{\sqrt{s}} \right) dy ds \right\|_{L^{p}} \nonumber \\
        &= \int_{0}^{\frac{1}{t}} s^{-1} \int_{0}^{1} \left\| \int_{\R} \p_{x}^{l+2}\left( \frac{1}{\eta}G\left(\frac{\cdot}{\eta} - y, \frac{1 - s}{\eta^{2}}\right) \right) \frac{y}{\sqrt{s}} F_{*}\left( \frac{y}{\sqrt{s}} \right) dy \right\|_{L^{p}}d\eta ds \nonumber \\
        &\le C \int_{0}^{\frac{1}{t}} s^{-1} \left(\int_{0}^{1} \eta^{-1} \eta^{-(l+2)}\eta^{\frac{1}{p}}\eta^{1-\frac{1}{p}+(l+2)}d\eta \right)(1-s)^{-\frac{1}{2}\left(1-\frac{1}{p}\right)-\frac{l+2}{2}}s^{\frac{1}{2}}\left\|yF_{*}\right\|_{L^{1}}ds \nonumber \\
        &\le C\left\|yF_{*}\right\|_{L^{1}} \int_{0}^{\frac{1}{t}}s^{-\frac{1}{2}}(1-s)^{-\frac{1}{2}\left(1-\frac{1}{p}\right)-\frac{l+2}{2}}ds 
                \le C \left\|yF_{*}\right\|_{L^{1}}t^{-\frac{1}{2}}, \ \ t>1, \label{v-V-Psi-pre2}
    \end{align}
    for any $1\le p \le \infty$. Finally, combining \eqref{v-V-Psi-pre1} and \eqref{v-V-Psi-pre2}, we can conclude that the desired estimate \eqref{v-V-Psi.est} is true.
\end{proof}

\begin{proof}[\rm{\bf{Proof of Theorem~\ref{thm.Duhamel}}}]
Applying all the Propositions \ref{prop.S-G-error}, \ref{prop.W-ap} and \ref{prop.v-V-Psi} to \eqref{formulation}, 
we can immediately see that the desired formula \eqref{I-ap} holds. This completes the proof. 
\end{proof}

\begin{proof}[\rm{\bf{Proof of Corollary~\ref{thm.Main}}}]
Combining \eqref{KdVB-IE} and the Theorems \ref{thm.Karch-L} and \ref{thm.Duhamel}, 
we are able to conclude that the formulas \eqref{main-1}, \eqref{main-2} and \eqref{main-3} are true. This completes the proof. 
\end{proof}

\begin{proof}[\rm{\bf{Proof of Corollary~\ref{cor.main}}}]
We shall give the proof only for the case {\rm (ii)}, because the cases {\rm (i)} and {\rm (iii)} are similar to {\rm (ii)} (actually, these cases are easier than them the case {\rm (ii)}). First, from the definition of the constant $C_{\dag}$ by \eqref{const-d}, we note that
\begin{align*}
t^{ \frac{1}{2}\left(1-\frac{1}{p}\right) + \frac{1}{2} } \left\| \left(m+ \frac{\beta \mathcal{M}}{3}\right)\p_{x}G(t) + \frac{\beta M^{3}}{3}\Psi(t) - \frac{M}{N!}(tD_{x}^{\alpha}\p_{x})^{N} G(t)\right\|_{L^{p}} 
= 
C_{\dag}. 
\end{align*}
Now, noticing $\alpha>1$ and using Lemma~\ref{lem.G-est}, we obtain  
\begin{align*}
t^{ \frac{1}{2}\left(1-\frac{1}{p}\right) + \frac{1}{2} }\sum_{k=1}^{l}\frac{t^{k}}{k!}\left\|(D_{x}^{\alpha}\p_{x})^{k}\p_{x}G(t)\right\|_{L^{p}} 
 \le C\sum_{k=1}^{l}t^{-\frac{k(\alpha-1)}{2}} \to 0
\end{align*}
as $t\to \infty$, for any $1\le p\le \infty$ and $l\in \mathbb{N}$. 
Therefore, it follows from \eqref{main-2} that 
\begin{align*}
&\biggl|\, t^{ \frac{1}{2}\left(1-\frac{1}{p}\right) + \frac{1}{2} }\biggl\| u(t)-M\sum_{k=0}^{N-1}\frac{t^{k}}{k!}(D_{x}^{\alpha}\p_{x})^{k}G(t)+\frac{\beta M^3}{12\sqrt{3}\pi} (\log t)\p_{x} G(t)\biggl\|_{L^{p}}-C_{\dag}\, \biggl| \\
&\le t^{ \frac{1}{2}\left(1-\frac{1}{p}\right) + \frac{1}{2} } \biggl\| u(t) - \sum_{k=0}^{N-1}\frac{t^{k}}{k!}(D_{x}^{\alpha}\p_{x})^{k}\left\{MG(t)-m\p_{x}G(t)\right\}  \\
&\quad - \frac{M}{N!}(tD_{x}^{\alpha}\p_{x})^{N} G(t) +\frac{\beta M^3}{12\sqrt{3}\pi} (\log t)\p_{x} G(t) + \frac{\beta \mathcal{M}}{3}\p_{x}G(t) + \frac{\beta M^{3}}{3}\Psi(t) \biggl\|_{L^{p}} \\
        &\quad + |m|t^{ \frac{1}{2}\left(1-\frac{1}{p}\right) + \frac{1}{2} }\sum_{k=1}^{N-1}\frac{t^{k}}{k!}\left\|(D_{x}^{\alpha}\p_{x})^{k}\p_{x}G(t)\right\|_{L^{p}} \to 0        
         \end{align*}
as $t\to \infty$, for any $1\le p\le \infty$ and $N\in \mathbb{N}$. 
We note that the last term in the right hand side of the above inequality does not appear when $N=1$. 
From the above result, we can conclude that the desired formula \eqref{cor-2} is true. This completes the proof.  
\end{proof}

\section*{Acknowledgment}

This study is supported by Grant-in-Aid for Young Scientists Research No.22K13939, Japan Society for the Promotion of Science. The authors also would like to thank the anonymous referee for helpful and valuable comments on the paper.



\vskip5pt
\begin{flushleft}
Ikki Fukuda\\
Division of Mathematics and Physics, \\
Faculty of Engineering, Shinshu University\\
4-17-1, Wakasato, Nagano, 380-8553, JAPAN\\
E-mail: i\_fukuda@shinshu-u.ac.jp

\vskip5pt
Yota Irino\\
School of Information Science, \\
Graduate School of Advanced Science and Technology, \\
Japan Advanced Institute of Science and Technology \\
1-1, Asahidai, Nomi, Ishikawa, 923-1211, JAPAN \\
E-mail: s2210019@jaist.ac.jp
\end{flushleft}

\end{document}